\newcommand{\nc}{\newcommand}
\numberwithin{equation}{section}
\newtheorem{thm}{Theorem}[section]
\newtheorem{prop}[thm]{Proposition}
\newtheorem{lem}[thm]{Lemma}
\newtheorem{cor}[thm]{Corollary}
\newtheorem{rem}[thm]{Remark}
\newtheorem{example}[thm]{Example}
\newtheorem{dfn}[thm]{Definition}
\newtheorem{conj}[thm]{Conjecture}
\nc{\gl}{\mathfrak{gl}}
\nc{\GL}{\mathfrak{GL}}
\nc{\g}{\mathfrak{g}}
\nc{\gh}{\widehat\g}
\nc{\fh}{\mathfrak{h}}
\nc{\la}{\lambda}
\nc{\al}{\alpha }
\nc{\be}{\beta }
\nc{\ve}{\varepsilon }
\nc{\om}{\omega }
\nc{\ta}{\theta}
\nc{\veps}{\varepsilon}
\nc{\ch}{{\mathop {\rm ch}}}
\nc{\Tr}{{\mathop {\rm Tr}\,}}
\nc{\Id}{{\mathop {\rm Id}}}
\nc{\bra}{\langle}
\nc{\ket}{\rangle}
\nc{\pa}{\partial}
\nc{\ld}{\ldots}
\nc{\cd}{\cdots}
\nc{\hk}{\hookrightarrow}
\nc{\T}{\otimes}
\nc{\Gr}{\mathrm{Gr}}
\nc{\Fl}{\mathrm{Fl}}
\nc{\ov}{\overline}
\nc{\msl}{\mathfrak{sl}}
\nc{\mgl}{\mathfrak{gl}}
\nc{\U}{\mathrm U}
\nc{\rot}{\mathrm{rot}}
\newcommand{\bC}{{\mathbb C}}
\newcommand{\bZ}{{\mathbb Z}}
\newcommand{\bP}{{\mathbb P}}
\newcommand{\bA}{{\mathbb A}}
\newcommand{\bO}{{\mathbb O}}
\newcommand{\bK}{{\mathbb K}}
\newcommand{\bfm}{{\bf m}}
\newcommand{\eO}{\EuScript{O}}
\newcommand{\bfJ}{{\bf J}}
\newcommand{\bfd}{{\bf d}}
\nc{\aGr}{{\mathcal Gr}}
\nc{\aFl}{{\mathcal Fl}}
\newcommand{\bI}{{\bf I}}
\begin{document}

\title[Global positroid varieties]
{Global positroid varieties}

\author{Evgeny Feigin}
\address{Evgeny Feigin:\newline
School of Mathematical Sciences, Tel Aviv University, Tel Aviv, 69978, Israel}
\email{evgfeig@gmail.com}

\begin{abstract}
Positroid subvarieties of complex Grassmannians are the images of the 
Richardson subvarieties of the full flag varieties under the natural projection 
map. Positroid varieties admit natural embedding into certain quiver Grassmannians
for equioriented cyclic quivers. Varying representations of the quiver, one defines global
positroid varieties inside the type A global affine Grassmannian. General fiber 
of a family is isomorphic to the corresponding classical positroid variety and the special 
fiber is a subvariety of the juggling variety. 
We show that the global positroid families are flat and describe their (conjecturally reduced) 
scheme structures. We also describe the special fibers inside the product of classical 
positroid varieties and realize the irreducible components of the special fibers in terms 
of the affine Richardson varieties.
\end{abstract}

\maketitle

\section{Introduction}
Positroid varieties form a stratification of the classical Grassmannians $\Gr_{k,n}$.
This stratification has many beautiful geometric, algebraic and combinatorial properties 
\cite{KLS13,Lam14,Sp23}.
Positroid stratification naturally shows up in the theory of totally non-negative flag varieties 
\cite{GKL22,Lus98a,Lus98b,Riet06,W07},
combinatorics \cite{Sn10,W05,Pos06}, cluster algebras \cite{FS-B22,GL23,GS24}, representation theory of quivers \cite{FLP22,FLP23a}. 
The positroid varieties are labeled by various combinatorial objects, such as juggling patterns, 
Grassmann necklaces, bounded affine permutations, decorated permutations \cite{Lam14,Pos06,Sp23}.
In this paper we mostly use the labeling by the juggling patterns $\bfJ$ -- collections 
$\{J_b,\ b\in\bZ_n\}$ 
of cardinality $k$ subsets of the set $\{1,\dots,n\}$ such that $j-1\in J_{b+1}$ whenever 
$j\in J_b$, $j>1$.
Let us denote by $\Pi_{\bfJ}$ the positroid variety corresponding to a juggling pattern $\bfJ$.
In particular, each $\Pi_{\bfJ}\subset \Gr_{k,n}$ is obtained as the image of a Richardson variety
inside the classical flag variety $SL_n/B$ under the natural projection $SL_n/B\to\Gr_{k,n}$. 

The Grassmann varieties admit a remarkable flat degeneration to the juggling variety \cite{Kn08} inside the global affine Grassmannian \cite{AB24,Ga01,PR08}.
The corresponding family
over the affine line shows up in arithmetic \cite{Go01,PRS13,He13},
geometric representation theory \cite{Ga01,Zhu17,Zhou19}, quivers theory \cite{FLP22,FLP23a}.
The special fiber of the family is a union of Schubert varieties inside affine flag variety
\cite{HL15,Zhu14}. The simplest way to describe the family is as follows: for each $b\in\bZ_n$ 
let us fix an $n$-dimensional vector space $W^{(b)}$; we 
fix bases $w^{(b)}_i$, $i=1,\dots,n$ of the spaces $W^{(b)}$. The family $\underline{\Gr}_{k,n}$
consists of collections of the form $(\veps,U_0,\dots,U_{n-1})$ such that $\veps\in\bA^1$,
$U_b\in\Gr_k(W^{(b)})$ and $M(\veps)U_b\subset U_{b+1}$, where $M(\veps)$ is a linear map 
sending $w^{(b)}_i$ to $w^{(b+1)}_{i-1}$ for $i>1$ and sending $w^{(b)}_1$ to $\veps w^{(b+1)}_n$. 
The map $\underline{\Gr}_{k,n}\to\bA^1$
is the projection to the first coordinate; the preimage $\pi^{-1}(0)$ is the juggling variety
and all other fibers are isomorphic to $\Gr_{k,n}$. 
We note that the family above is a special case of the global Schubert varieties inside affine Grassmannians
\cite{AB24,F25,FLP23b,HY24} (one also finds similar generalizations 
on the totally non-negative side \cite{LaPy12,LaPy13,RW24}).

The explicit description above is closely related to the positroid varieties $\Pi_\bfJ$.
Namely, for any non-zero $\veps$, $\Pi_\bfJ$ is isomorphic to the intersection of $\pi^{-1}(\veps)$
with the product of Schubert varieties $X^-_{J_b}$ in $\Gr_k(W^{(b)})$ corresponding to the
$k$-tuples $J_b$. Here is the main definition of our paper: the global positroid variety
inside the global Grassmannian is defined as
\[
\underline{\Pi}_\bfJ = \underline\Gr_{k,n}\bigcap \left( \bA^1\times \prod_{b\in\bZ_n} X^-_{J_b} \right).
\]  
Slightly abusing notations, we denote by the same letter $\pi$ the projection from the global 
positroid variety to the first factor. 
Our goal is to study the whole family $\underline{\Pi}_\bfJ$ as well as the special fiber 
$\Pi_\bfJ(0)\subset \underline{\Pi}_\bfJ$.
Here is our first theorem.

\begin{thm}
For any juggling pattern $\bfJ$ the global positroid family $\underline{\Pi}_\bfJ$ is flat.
The closure of $\pi^{-1}(\bA^1\setminus \{0\})$ is equal to the whole family.
\end{thm}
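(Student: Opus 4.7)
The plan is to show that $\underline{\Pi}_\bfJ$ coincides with the scheme-theoretic closure of its own restriction over $\bG_m$. Since $\bA^1$ is a smooth curve, a family over $\bA^1$ is flat if and only if it equals such a scheme-theoretic closure, so the two assertions of the theorem are really one and the same claim. To set up, observe that for $\veps\in\bG_m$ the map $M(\veps)\colon W^{(b)}\to W^{(b+1)}$ is an isomorphism of vector spaces, so the inclusion $M(\veps)U_b\subset U_{b+1}$ in the definition of $\underline{\Gr}_{k,n}$ is forced by dimension to be an equality $U_{b+1}=M(\veps)U_b$. Projecting $(\veps,U_0,\ldots,U_{n-1})\mapsto(\veps,U_0)$ therefore identifies $\pi^{-1}(\bG_m)$ with $\bG_m\times\Gr_{k,n}$, and identifies its intersection with $\bA^1\times\prod_b X^-_{J_b}$ with $\bG_m\times\Pi_\bfJ$. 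Since $\Pi_\bfJ$ is irreducible of some dimension $d$ by \cite{KLS13}, the open subscheme $\pi^{-1}(\bG_m)\cap\underline{\Pi}_\bfJ$ is irreducible of dimension $d+1$, and its closure $Z\subset\underline{\Pi}_\bfJ$ is an irreducible $(d+1)$-dimensional subscheme dominating $\bA^1$; hence $Z\to\bA^1$ is automatically flat.

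The heart of the proof is the equality $Z=\underline{\Pi}_\bfJ$, and I would attack it in two stages. At the underlying-variety level one must rule out irreducible components of $\underline{\Pi}_\bfJ$ supported entirely over the origin. Following the route foreshadowed by the abstract, I would describe the special fiber $\Pi_\bfJ(0)=\pi^{-1}(0)\cap\prod_b X^-_{J_b}$ as a finite union of affine Richardson varieties in the affine flag variety, using the realization of the juggling variety from \cite{Kn08,HL15,Zhu14}, and for each such Richardson component exhibit an explicit one-parameter arc in $\pi^{-1}(\bG_m)\cap\underline{\Pi}_\bfJ$ degenerating to a generic point of that component. This would force $\Pi_\bfJ(0)\subset Z$ and hence the set-theoretic equality $Z=\underline{\Pi}_\bfJ$. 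To upgrade to the scheme level, I would verify that $\underline{\Pi}_\bfJ$ is Cohen--Macaulay, which excludes embedded associated points over the origin. Both $\underline{\Gr}_{k,n}$ and $\bA^1\times\prod_b X^-_{J_b}$ are Cohen--Macaulay subschemes of the smooth ambient $\bA^1\times\prod_b\Gr_k(W^{(b)})$, and the dimension count implicit in the Richardson decomposition forces their intersection $\underline{\Pi}_\bfJ$ to be of the expected codimension, so Cohen--Macaulayness follows by the standard intersection criterion.

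The main obstacle is the construction of the degeneration arcs in the second step. The juggling variety $\pi^{-1}(0)$ is itself a union of many affine Schubert varieties, and its scheme-theoretic intersection with $\prod_b X^-_{J_b}$ could a priori produce components that are not realized as limits of generic-fiber points. Matching the affine Weyl group data of each Richardson component of $\Pi_\bfJ(0)$ to an explicit family of points in the generic fiber, and simultaneously ruling out any spurious component, is the combinatorial and geometric crux of the argument, where the precise structure of the juggling pattern $\bfJ$ and the cyclic ordering of the Schubert data enter in a decisive way.
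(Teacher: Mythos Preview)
Your overall strategy matches the paper's: reduce flatness to showing that the closure $Z$ of $\pi^{-1}(\bG_m)$ exhausts $\underline\Pi_\bfJ$, and verify this by checking that every irreducible component of the special fiber $\Pi_\bfJ(0)$ lies in $Z$. The Richardson decomposition of $\Pi_\bfJ(0)$ (Theorem~\ref{thm:Richardson} in the paper) is indeed the needed input.

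Two differences deserve comment. First, the paper's execution of the ``arc'' step is considerably simpler than what you sketch. Instead of aiming at a \emph{generic} point of each Richardson component, the paper uses the torus fixed point $p(S)$ indexing that component: as a quiver representation, $p(S)$ decomposes into $n$-dimensional summands $\mathrm{span}\{w^{(s)}_n,w^{(s+1)}_{n-1},\dots,w^{(s+n-1)}_1\}$ that are subrepresentations for $M(\veps)$ at \emph{every} value of $\veps$. Thus $p(S)$ is a constant section of the whole family, and since $p(S)$ lies in the open cell of its component of the juggling variety, it belongs to no other component of $\Pi_\bfJ(0)$. Combined with the equidimensionality $\dim\Pi_\bfJ(0)=\dim\Pi_\bfJ$ from Theorem~\ref{thm:Richardson}, this forces each component into $Z$. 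No delicate arc-to-generic-point construction, and essentially no combinatorics of $\bfJ$, is required.

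Second, your Cohen--Macaulay step is both unnecessary and not properly justified. In the paper $\underline\Pi_\bfJ$ is \emph{defined} as a reduced subvariety, so there are no embedded components to exclude; once $Z=\underline\Pi_\bfJ$ as sets, equality of reduced schemes is immediate. Moreover, the ``standard intersection criterion'' you invoke does not hold in the stated generality: two Cohen--Macaulay subschemes of a smooth ambient meeting in the expected dimension need not have Cohen--Macaulay intersection unless one of them is a local complete intersection, and neither $\underline\Gr_{k,n}$ nor a product of Schubert varieties is lci in general. Indeed, the paper leaves the reducedness of the natural scheme-theoretic intersection as the open Conjecture~\ref{conj:reduced}.
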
 

In order to describe  $\Pi_\bfJ(0)$ we recall that the special fiber of the global Grassmannian
is isomorphic to a union of $\binom{n}{k}$ Schubert varieties $Y_\sigma$, $\sigma\in T_{k,n}$ 
inside affine flag variety $\aFl$ for the affine group ${GL}_n$.
Here $T_{k,n}$ is a cardinality $\binom{n}{k}$ subset of the extended affine Weyl group $W^{ext}$. 
The torus fixed points inside this union
are labeled by the juggling patterns, which are in bijection with the set of the 
extended affine Weyl 
group elements which are smaller than or equal to one of the elements $\sigma\in T_{k,n}$.
For a juggling pattern $\bfJ$ we denote the corresponding affine Weyl group element by $w(\bfJ)$.

Let $R_u^w\subset \aFl$ be the Richardson varieties inside the affine flag variety, $u,w\in W^{ext}$,
$u\le w$ \cite{KL80,Rich92}. Each Richardson variety is an intersection of the opposite Schubert 
varieties corresponding to the elements $u$ and $w$. Here is our second theorem.

\begin{thm}
For a juggling pattern $\bfJ$ the special positroid variety $\Pi_\bfJ(0)$ is (in general) reducible 
and equi-dimensional with $\dim \Pi_\bfJ(0)=\dim \Pi_\bfJ$. Each irreducible component is  
isomorphic to a Richardson variety $R_{w(\bfJ)}^{\sigma}$ for $\sigma\in T_{k,n}$, 
$\sigma\ge w(\bfJ)$.
\end{thm}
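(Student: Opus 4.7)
The plan is to exhibit $\Pi_\bfJ(0)$ as a union of Richardson varieties inside the affine flag variety $\aFl$, and to use the flatness of $\underline{\Pi}_\bfJ$ from the first theorem to pin down the dimensions. Since the special fiber of $\underline{\Gr}_{k,n}$ is realized as $\bigcup_{\sigma\in T_{k,n}} Y_\sigma \subset \aFl$, the subscheme $\Pi_\bfJ(0)$ is cut out inside this union by the product of opposite Schubert conditions $\prod_{b\in\bZ_n} X^-_{J_b}$ that defines $\underline{\Pi}_\bfJ$ as a subscheme of $\underline{\Gr}_{k,n}$.

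First I would identify these factor-wise conditions, via the embedding into $\aFl$, with a single opposite Schubert condition $Y^{w(\bfJ)} \subset \aFl$. The idea is that the $T$-fixed points of the juggling variety are in bijection with juggling patterns, and the pattern $\bfJ$ corresponds precisely to the fixed point labeled by $w(\bfJ) \in W^{ext}$. Tracking the defining Pl\"ucker-type equations of $X^-_{J_b}$ in each factor and matching them with those cutting out the opposite Schubert variety based at this fixed point should give
\[
\Pi_\bfJ(0) = \bigcup_{\sigma \in T_{k,n}} \bigl(Y_\sigma \cap Y^{w(\bfJ)}\bigr).
\]
Each non-empty summand is by definition the Richardson variety $R_{w(\bfJ)}^\sigma$, and non-emptiness is equivalent to $\sigma \ge w(\bfJ)$ in the Bruhat order of $W^{ext}$.

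Next, I would apply Theorem 1.1 to conclude both parts of the dimension statement: flatness forces $\dim \Pi_\bfJ(0) = \dim \Pi_\bfJ$, and the closure condition guarantees that every irreducible component of $\Pi_\bfJ(0)$ is a specialization of the generic fiber $\Pi_\bfJ$, ruling out spurious components not in the list. Each $R_{w(\bfJ)}^\sigma$ is irreducible of dimension $\ell(\sigma) - \ell(w(\bfJ))$; equi-dimensionality then reduces to the fact that all $\sigma \in T_{k,n}$ have a common length (the standard coset representatives controlling $\bigcup_\sigma Y_\sigma$), so each $\sigma \ge w(\bfJ)$ in $T_{k,n}$ contributes a component of the same dimension, which must match $\dim \Pi_\bfJ$.

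The main obstacle is the first identification: converting the factor-wise conditions $U_b \in X^-_{J_b}$ into the intrinsic opposite Schubert condition $Y^{w(\bfJ)}$ in the affine flag variety. This requires a careful translation between the local Grassmannian-by-Grassmannian data (which labels juggling patterns) and the global combinatorics of $W^{ext}$ on $\aFl$; in particular, one must verify that the assignment $\bfJ \mapsto w(\bfJ)$ is compatible with the Bruhat order, so that the cutting-out condition is exactly $\{\sigma \ge w(\bfJ)\}$. A secondary subtlety, explicitly flagged by the paper (\emph{``conjecturally reduced''}), is that the scheme-theoretic isomorphism with $R_{w(\bfJ)}^\sigma$ should be read on underlying reduced schemes.
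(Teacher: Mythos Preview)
Your overall strategy---identify the factor-wise opposite Schubert conditions $U_b\in X^-_{J_b}$ with the single opposite Iwahori condition $Y^{w(\bfJ)}$ inside $\aFl$, intersect with the components $Y_\sigma$ of the juggling variety, and read off the Richardson pieces---is exactly the paper's approach, and you have correctly isolated the key step.

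The genuine problem is a circularity in your dimension argument. You propose to invoke Theorem~1.1 (flatness of $\underline{\Pi}_\bfJ$) to conclude $\dim\Pi_\bfJ(0)=\dim\Pi_\bfJ$. But in the paper the logical order is reversed: the proof of flatness (Theorem~\ref{thm:flat}) \emph{uses} the equality $\dim\Pi_\bfJ(0)=\dim\Pi_\bfJ$ established in the present theorem (Theorem~\ref{thm:Richardson}). Concretely, the flatness argument needs to know in advance that every irreducible component of the special fiber has dimension $\dim\Pi_\bfJ$ in order to show that the closure of $\pi^{-1}(\bA^1\setminus\{0\})$ fills out the whole family. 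So your appeal to flatness here is not available.

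The fix is straightforward and you already have the ingredients. You note that each $R_{w(\bfJ)}^\sigma$ has dimension $\ell(\sigma)-\ell(w(\bfJ))$ and that all $\sigma\in T_{k,n}$ share the common length $k(n-k)$. What remains is the purely combinatorial identity $\dim\Pi_\bfJ = k(n-k)-\ell(w(\bfJ))$, which is a standard fact about positroid varieties: under the bijection between juggling patterns and bounded affine permutations, the codimension of $\Pi_\bfJ$ in $\Gr_{k,n}$ equals the length of the corresponding affine Weyl group element (see \cite{KLS13,Lam14}). This is exactly how the paper closes the argument, with no recourse to flatness. Your remark about ``ruling out spurious components'' via the closure condition is likewise unnecessary: once $\Pi_\bfJ(0)=\bigcup_\sigma R_{w(\bfJ)}^\sigma$ is established set-theoretically, the list of components is already complete.
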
 

We also provide a defining ideal for $\Pi_\bfJ(0)$ and conjecture that the ideal is radical.
We prove the conjecture for $k=1$ in the Appendix using certain generalized rectangular 
semi-standard tableaux (unavailable so far for $k>1$).  
More precisely, $\Pi_\bfJ(0)$ is realized as a subvariety in the product of projective spaces
and hence the defining ideal and the multi-homogeneous coordinate ring are multi-graded
(the Pl\"ucker variables $\Delta^{(a)}_I$ come equipped with colors $a\in\bZ_n$).
Hence it is natural 
to ask for a generalized (colored) version of the semi-standard tableaux (see \cite{AGH23,Lam19,EH24}
for similar results in the non-colored case).

Finally, we express the special positroid varieties in terms of the classical ones as follows. Let 
$\mathrm{rot}$ be an automorphism of the set of the juggling patterns which rotates the entries:
$\mathrm{rot}(J_0,\dots,J_{n-1})=(J_1,J_2,\dots,J_{n-1},J_0)$. We prove the following theorem.

\begin{thm}
For a juggling pattern $\bfJ$ the special positroid variety $\Pi_\bfJ(0)$ is given by
\[
\Pi_\bfJ(0) = \Gr_{k,n}(0) \bigcap \left(\prod_{b\in\bZ_n} \Pi_{\mathrm{rot}^b\bfJ}\right).
\]
In particular, each classical positroid variety $\Pi_\bfJ$ is equal to the image of an affine
Richardson variety of the form $R_{w}^{\sigma}$ for some $\sigma\in T_{k,n}$. 
\end{thm}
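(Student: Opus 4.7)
My plan is to deduce both assertions from Theorems 1.1 and 1.2. The easy inclusion
$\Gr_{k,n}(0) \cap \prod_b \Pi_{\rot^b \bfJ} \subseteq \Pi_\bfJ(0)$
is tautological: since the $0$-th entry of $\rot^b \bfJ$ is $J_b$, we have $\Pi_{\rot^b \bfJ} \subseteq X^-_{J_b}$ inside $\Gr_k(W^{(b)})$, so intersecting with $\Gr_{k,n}(0)$ gives a subset of $\Gr_{k,n}(0) \cap \prod_b X^-_{J_b}$, which is $\Pi_\bfJ(0)$ by the very definition of the family.

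For the reverse inclusion I would exploit the cyclic symmetry of classical positroid varieties together with Theorem 1.1. On a nonzero fiber $\pi^{-1}(\veps)$ the operator $M(\veps)$ is a twisted cyclic shift satisfying $M(\veps)^n = \veps\cdot\Id$, so any tuple $(U_0, \ldots, U_{n-1}) \in \pi^{-1}(\veps) \cap \underline{\Pi}_\bfJ$ is determined by $U_0 \in \Pi_\bfJ$ via $U_b = M(\veps)^b U_0$. The conditions that the iterated images $M(\veps)^c U_b = U_{b+c}$ lie in $X^-_{J_{b+c}}$ for all $c \in \bZ_n$ are precisely the defining conditions for $U_b$ to belong to $\Pi_{\rot^b \bfJ} \subset \Gr_k(W^{(b)})$, since $(\rot^b \bfJ)_c = J_{b+c}$. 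Hence $\pi^{-1}(\bA^1 \setminus \{0\})$ is contained in the closed subset $\bA^1 \times \prod_b \Pi_{\rot^b \bfJ}$ of $\bA^1 \times \prod_b \Gr_k(W^{(b)})$. Theorem 1.1 identifies the closure of $\pi^{-1}(\bA^1 \setminus \{0\})$ with $\underline{\Pi}_\bfJ$, so the inclusion extends to the whole family, and specializing at $\veps=0$ yields $\Pi_\bfJ(0) \subseteq \Gr_{k,n}(0) \cap \prod_b \Pi_{\rot^b \bfJ}$.

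For the ``in particular'' clause I would consider the projection $p : \underline{\Pi}_\bfJ \to \Gr_k(W^{(0)}) \cong \Gr_{k,n}$. For $\veps \neq 0$ the restriction $p|_{\pi^{-1}(\veps)}$ realizes the isomorphism $\pi^{-1}(\veps) \cong \Pi_\bfJ$, so $p$ surjects onto $\Pi_\bfJ$ over $\bA^1 \setminus \{0\}$. Properness of the family (it is closed in $\bA^1 \times \prod_b \Gr_k(W^{(b)})$, whose product factor is projective) then lifts this surjectivity to the special fiber: any $U \in \Pi_\bfJ$ admits a section along $\bA^1 \setminus \{0\}$ inside $\underline{\Pi}_\bfJ$, and the proper map $p^{-1}(U) \cap \underline{\Pi}_\bfJ \to \bA^1$ therefore has a point above $\veps=0$. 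Hence $p(\Pi_\bfJ(0)) = \Pi_\bfJ$. By Theorem 1.2 we write $\Pi_\bfJ(0) = \bigcup_\sigma R^\sigma_{w(\bfJ)}$ as a finite union of irreducible closed Richardson components, and irreducibility of $\Pi_\bfJ$ forces $p(R^\sigma_{w(\bfJ)}) = \Pi_\bfJ$ for at least one $\sigma \in T_{k,n}$. The only substantive technical point in the argument is the cyclic-symmetry verification in the second paragraph; this is essentially formal once one unpacks the relabeling of Schubert conditions induced by the shift $M(\veps)$.
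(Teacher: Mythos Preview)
Your argument is correct, but it follows a genuinely different route from the paper's for the main inclusion $\Pi_\bfJ(0) \subseteq \Gr_{k,n}(0) \cap \prod_b \Pi_{\rot^b(\bfJ)}$. The paper (Proposition~\ref{prop:product}) works directly at $\veps=0$: given ${\bf U}\in\Pi_\bfJ(0)$ it verifies $\varphi^a U_b \in X^-_{J_{a+b}}$ by a hands-on linear-algebra comparison of the nilpotent composite $M_{b\to a+b}(0)$ with the invertible cyclic shift $\varphi^a$, splitting each ambient space $W^{(\bullet)}$ into two complementary pieces and tracking how the minimal Schubert label of $\varphi^a U_b$ compares with that of $U_{a+b}$. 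You instead observe that on a nonzero fiber $M(\veps)^c$ and $\varphi^c$ differ by a torus element (this is the ``relabeling'' you allude to, and it is exactly what makes the Schubert conditions transfer, since $X^-_J$ is $T$-stable), so $U_b\in\Pi_{\rot^b(\bfJ)}$ holds there automatically; then you push this closed condition through to the special fiber via the closure statement in Theorem~1.1. Your route is cleaner and bypasses the linear algebra entirely, at the price of importing Theorem~1.1 as a black box; in the paper's internal logic Proposition~\ref{prop:product} is established first and independently, while Theorem~1.1 ($=$ Theorem~\ref{thm:flat}) is proved afterwards and rests on Theorem~1.2, so there is no circularity either way. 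For the ``in particular'' clause, the paper (Corollary~\ref{cor:proj}) obtains surjectivity of $\mathrm{pr}_b$ onto $\Pi_{\rot^b(\bfJ)}$ from the cellular decomposition of the juggling variety, whereas your properness argument does the same job without that combinatorics; both then conclude via Theorem~1.2 and the irreducibility of $\Pi_\bfJ$.
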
 
 
We close the paper with the appendix, where the $k=1$ case is worked out in full detail. 
 
Our paper is organized as follows.  In Section \ref{sec:prelim} we collect preliminaries on 
Schubert varieties, quiver Grassmannians and affine flag varieties. In Section \ref{sec:global} 
we introduce the global positroid varieties and study their properties. Section \ref{sec:special}
 is devoted to the study of the special fiber of the family. In Appendix we give all the details 
 for the  $k=1$ case.

\section{Preliminaries}\label{sec:prelim}
\subsection{Grassmannians, flags, Schubert varieties}
Let us fix two positive integers $k$ and $n$, $1\le k <n$; we set 
$[n]=\{1,\dots,n\}$. Let $W$ be an $n$-dimensional 
complex vector space with a basis $w_1,\dots,w_n$. Let $\Gr_{k,n}$ be the Grassmannian of $k$-dimensional subspaces in $W$. For an element $\sigma$ from the Weyl group $S_n$ we denote by $p_\sigma\in \Gr_{k,n}$
the subspace spanned by the basis elements $w_{\sigma(i)}$, $1\le i\le k$.
The Grassmann varieties are acted upon by the group $GL_n$. The points $p_\sigma$ 
are exactly the fixed points for the torus $T\subset SL_n$ of diagonal matrices. 

One has the Pl\"ucker embedding $\Gr_{k,n}\to \bP(\Lambda^kW)$;
the image of this embedding is defined by the  Pl\"ucker ideal
generated by certain quadratic expressions in Pl\"ucker variables $\Delta_I$, $I\in\binom{[n]}{k}$ (dual to the wedge products of the basis vectors $w_i$).
The expressions are labeled by collections $I,J\in\binom{[n]}{k}$ and a number $r=1,\dots,k$.
The corresponding generators is explicitly given by 
\[
g_{I,J} = \Delta_I\Delta_J - \sum \Delta_{I'}\Delta_{J'},
\]    
where the pairs ${I'},{J'}$ in the right hand side are obtained from $I,J$ by interchanging the 
element $j_r$ in $J$ with all elements of $I$ (keeping in mind that 
$\Delta_{\tau I}=(-1)^\tau \Delta_I$ for $\tau\in S_k$).

Let $B,B^-\subset SL_n$ be the Borel subgroups of upper- and lower-triangular matrices 
(in particular, both $B$ and $B^-$ contain $T$).  
For a Weyl group element $\sigma\in S_n$ let 
$\mathring X_\sigma = B.p_\sigma\subset \Gr_{k,n}$ and $\mathring X^-_\sigma=B^-.p_\sigma\subset \Gr_{k,n}$ be the open Schubert varieties -- the orbits  
of the Borel subgroup $B$ and the opposite Borel subgroup $B^-$. 
The closed Schubert varieties are the closures of the corresponding orbits:
$X_\sigma = \overline{\mathring X_\sigma}$, 
$X^-_\sigma = \overline{\mathring X^-_\sigma}$. 
In particular, for the longest element $w_0\in S_n$, one has 
$X_{w_0}=X^-_{e}=\Gr_{k,n}$ and $X_e=p_e$, $X^-_{w_0}=p_{w_0}$ are just points. 

Schubert varieties $\mathring X_\sigma$, $X_\sigma$, $\mathring X^-_\sigma$, $X^-_\sigma$
inside $\Gr_{k,n}$ depend only on the image $\sigma[k]$, so we sometimes denote 
$\mathring X_\sigma$ by $\mathring X_{\sigma[k]}$ (and similarly for other Schubert varieties).
For two cardinality $k$ subsets $I,J\subset [n]$, $I=(i_1<\dots<i_k)$, $J=(j_1<\dots <j_k)$  
we write $I\le J$ if $i_u\le j_u$ for all $u\in [k]$. We note that $\Delta_I$ vanishes on 
$X_\sigma$ unless $I\le J$. Similarly, $\Delta_I$ vanishes on $X^-_\sigma$ unless $I\ge J$. 

Let $\Fl_n=SL_n/B$ be the variety of complete flags consisting of
collections $(V_i)_{i=1}^{n-1}$ of subspaces of the space $W$ such that $\dim V_i=i$
and $V_i\subset V_{i+1}$. In particular, one has an
obvious embedding $\Fl_n\subset \prod_{k=1}^{n-1} \Gr_{k,n}$. 
Slightly abusing notation, we denote by the same symbol $p_\sigma\in\Fl_n$ the product of the corresponding points in the Grassmannians. The open Schubert varieties and 
their opposite analogues in
the flag varieties are defined (as in the Grassmann case) by
\[
\mathring Y_\sigma = B.p_\sigma\subset \Fl_n,\quad 
\mathring Y^-_\sigma = B^-.p_\sigma\subset \Fl_n;
\] 
the closed Schubert varieties are the orbit closures.

Finally, let us define the Richardson varieties inside $\Fl_n$. Given two Weyl group elements
$\sigma,\tau\in S_n$ the open Richardson variety $\mathring R_{\sigma}^\tau\subset \Fl_n$ is defined 
as the intersection $\mathring X_\sigma\cap \mathring X^-_\tau$. 
One defines the closed Richardson varieties $R_\sigma^\tau$ as the closure of  
$\mathring R_\sigma^\tau$.
Clearly, $R_\sigma^\tau=\emptyset$ unless $\sigma$ is smaller than or equal to $\tau$ in the Bruhat
order in $S_n$ (if $\sigma=\tau$, then $R_\sigma^\tau$ is a single point $p_\sigma$). 

\begin{rem}
The definition of the Schubert varieties, opposite Schubert varieties and Richardson varieties 
is one and the same in all Lie types and for all partial flag varieties. However, in this
paper we only need type $A$ Grassmannians and type $A$ complete flag varieties. 	
\end{rem}

\subsection{Positroid varieties}
Positroid varieties inside the Grassmannian $\Gr_{k,n}$ are defined as images of the Richardson 
varieties in the full flag  variety $\Fl_n$ with respect to the natural surjection 
$\Fl_n\to\Gr_{k,n}$ \cite{KLS13}. 
In order to describe the positroid stratification of the Grassmannians 
explicitly, one uses the formalism of juggling patterns and Schubert subvarieties of the Grassmann varieties.

Recall that a juggling pattern is a collection $\bfJ=(J_0,\dots,J_{n-1})$ with each $J_b$
being a cardinality $k$ subset of $[n]$ subject
to the conditions $J_b-1\subset J_{b+1}$. Here $J_b-1=\{j-1:\ j\in J_b, j>1\}$ and the lower index 
$b+1$ is taken modulo $n$ (i.e. in $\bZ_n$). The strata of $\Gr_{k,n}$ 
of the positroid stratification are labeled by the juggling patterns, i.e. 
$\Gr_{k,n}=\bigsqcup \mathring \Pi_\bfJ$. The closed positroid varieties $\Pi_\bfJ$ are defined as
closures of $\mathring \Pi_\bfJ$. The positroid varieties $\mathring\Pi_\bfJ$
 admit the following explicit description:
 \begin{equation}\label{eq:intSch}
 \mathring\Pi_\bfJ = \bigcap_{b=0}^{n-1} \varphi^{-b}\left(\mathring X^-_{J_b}\right),\ 
 \varphi: W\to W,\ \varphi(w_i) = \begin{cases}w_{i-1}, & i>1,\\ w_n, & i=1\end{cases}  	
 \end{equation}
 and similarly for the closed positroid varieties $\Pi_\bfJ$ (with $\mathring X^-_{J_b}$ replaced
 with $X^-_{J_b}$).
 
\begin{example}\label{ex:full}
Let us consider the juggling pattern $\bfJ$ such that $J_b=\{1,\dots,k\}$ for 
all $b\in\bZ_n$. Then $\Pi_{\bfJ}=\Gr_{k,n}$ and $\mathring\Pi_{\bfJ}$ is the subset 
consisting of subspaces $U$ such that $\Delta_I(U)\ne 0$ for all $I$ of the form
$I=\{i,i+1,\dots,i+k-1\}$, $i\in [n]$ (the indices $i+j$ are understood as $i+j-n$ 
whenever $i+j>n$). 	
\end{example} 
 	
\begin{example}\label{ex:zero-dim}
Let us fix a subset $S\subset \bZ_n$, $|S|=k$. We define the juggling 
pattern $\bfJ(S)=\{J_b(S)\}_{b\in\bZ_n}$ by the following property: $n\in J_s(S)$ for 
any $s\in S$ (this property determines $\bfJ(S)$ uniquely thanks to the juggling pattern condition $J_b-1\subset J_b$ and $|J_b|=k$ for all $b$). Then $\Pi_{\bfJ(S)}$ is a single point, whose $b$-th component
is the span of vectors of the form $w_j$, $j\in J_b(S)$. These are all the zero-dimensional positroid varieties.
\end{example}

\subsection{Quiver Grassmannians}\label{sec:qG}   
A quiver Grassmannian $\Gr_\bfd(M)$ \cite{CI20}  is defined by a quiver $Q$, its representation $M$ and a 
dimension vector $\bfd\in \bZ_{\ge 0}^{Q_0}$, where $Q_0$ is the set of vertices of $Q$.
The points of $\Gr_\bfd(M)$ are the $\bfd$-dimensional $Q$-subrepresentations of $M$. We will only work with the cyclic equioriented quiver and a very special class of representations, so we skip the general formalism and define the concrete objects we are working with.

Let $Q$ be the equioriented cyclic quiver with $n$ vertices labeled by the elements of 
$\bZ_n$. We define
a family of $Q$ modules $M(\veps)$, depending on a parameter $\veps\in\bC$. For each vertex
$b\in Q_0$ (and for all values of the parameter $\veps$) the space $M^{(b)}(\veps)$ coincides 
with $W$. The maps $M_{b\to b+1}(\veps)$ are given as follows:
\begin{equation}\label{eq:themap}
M_{b\to b+1}(\veps): w_i \mapsto \begin{cases} w_{i-1}, & i>1\\ \veps w_n, & i=1\end{cases} 
\end{equation}
In particular, $M_{b\to b+1}(1)=\varphi$ (see \eqref{eq:intSch}).
In what follows in order to avoid confusions we denote the vector $w_i$ inside 
$W\simeq M^{(b)}(\veps)$ by $w_i^{(b)}$. 
The representations $M(\veps)$ are all isomorphic for $\veps\ne 0$, they decompose as the direct sum
of $n$ copies of a representation of dimension $(1,\dots,1)$ with all maps being identities:
\[
M(\veps) = \bigoplus_{j=1}^n \mathrm{span}\{w^{(b)}_{j-b},\ b\in\bZ_n\}
\]
(here $j-b$ is understood as $j-b+n$ for $j-b\le 0$).
If $\veps=0$, then the same formula provides a decomposition of $M(0)$ into the direct sum of
$n$ pairwise non-isomorphic nilpotent indecomposable representations of dimension $(1,\dots,1)$. 

\begin{rem}\label{rem:Grneck}
We note that all the maps in \eqref{eq:themap} are given by the same formula; in particular, for
$\veps=0$ the kernels of all the maps are spanned by $w^{(b)}_1$. For $\veps\ne 0$ the maps 
\eqref{eq:themap} are isomorphisms, but the identification of the spaces at different vertices 
shifts the lower indices of the basis vectors $w^{(b)}_\bullet$. 
This disadvantage can be fixed by shifting
the numbering of the vectors $w^{(b)}_i$; the shift has to depend on a vertex $b$. 
The price to pay is that 
after such shifts the maps  $M_{b\to b+1}(\veps)$ will depend on $b$. The isomorphism between the two
pictures is a lift of the combinatorial identification of juggling patterns and Grassmann necklaces
(see \cite{Pos06,Sp23}). 
\end{rem}

 Now let us fix a dimension vector $\bfd=(k,\dots,k)$ and denote by $\Gr_{k,n}(\veps)$ the quiver
 Grassmannian $\Gr_{(k,\dots,k)}(M(\veps))$.
 In particular, for $\veps\ne 0$ one has an isomorphism 
$\Gr_{(k,\dots,k)}(M(\veps))\simeq \Gr_{k,n}$ and 
$\Gr_{(k,\dots,k)}(M(0))$ is the so-called juggling variety, which shows up in various context, see \cite{Go01,Go03,Pap18,FLP23a}. 
 We also denote by $\underline\Gr_{k,n}\subset \bA^1\times\prod_{b\in\bZ_n}\Gr_{k,n}$ the global
 (universal) quiver Grassmannian consisting of collections
 $(\veps,U_0,\dots,U_{n-1})$ such that $(U_b)_{b\in \bZ_n}\in \Gr_{k,n}(\veps)$.
 We summarize the picture in the following lemma \cite{FLP22,Go01}.
 
\begin{lem}
The family $\underline\Gr_{k,n}$ is flat over $\bA^1$. 
For $\veps\ne 0$ each fiber $\Gr_{k,n}(\veps)$ is isomorphic to $\Gr_{k,n}$. The special fiber
$\Gr_{k,n}(0)$ is the juggling variety;
it is equidimensional of dimension $k(n-k)$ and has $\binom{n}{k}$ irreducible components.  
\end{lem}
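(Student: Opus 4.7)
I would treat the three assertions separately, with flatness requiring the most effort.

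For $\veps\ne 0$, each structure map $M_{b\to b+1}(\veps)$ is a linear bijection of $W$, since it sends the basis $\{w_i^{(b)}\}$ to a nonzero rescaling of the basis $\{w_i^{(b+1)}\}$. A $(k,\ldots,k)$-dimensional subrepresentation $(U_b)_b$ of $M(\veps)$ must then satisfy $M_{b\to b+1}(\veps)U_b=U_{b+1}$ by a dimension count. Hence the first-component projection $(U_b)_b\mapsto U_0$ gives an algebraic bijection $\Gr_{k,n}(\veps)\to\Gr_k(W)=\Gr_{k,n}$, with algebraic inverse obtained by iterating the structure maps; this is an isomorphism.

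By construction $\Gr_{k,n}(0)$ is the juggling variety. For the dimension count, equidimensionality, and number of components I would invoke the embedding of the juggling variety into the affine flag variety $\aFl$ from \cite{HL15,Zhu14}, which realizes $\Gr_{k,n}(0)$ as the scheme-theoretic union of the $\binom{n}{k}$ Schubert varieties $Y_\sigma$, $\sigma\in T_{k,n}$. Each $Y_\sigma$ is irreducible of length, hence dimension, $\ell(\sigma)=k(n-k)$ (a minuscule length calculation in the extended affine Weyl group), and distinct $\sigma\in T_{k,n}$ yield distinct components, giving the claimed $\binom{n}{k}$ equidimensional components.

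Since $\bA^1$ is a smooth curve, $\pi$ is flat iff every irreducible component of $\underline\Gr_{k,n}$ dominates $\bA^1$, equivalently iff $\underline\Gr_{k,n}$ equals the closure $Y$ of $\pi^{-1}(\bA^1\setminus\{0\})$. Since $\pi^{-1}(\bA^1\setminus\{0\})\simeq(\bA^1\setminus\{0\})\times\Gr_{k,n}$ is irreducible of dimension $k(n-k)+1$, so is $Y$, and upper semi-continuity shows every component of $Y\cap\pi^{-1}(0)$ has dimension at least $k(n-k)$. Using the previous paragraph, $\Gr_{k,n}(0)$ is equidimensional of dimension exactly $k(n-k)$, so the identity $\underline\Gr_{k,n}=Y$ reduces to showing that each Schubert component $Y_\sigma\subset\Gr_{k,n}(0)$ is contained in $Y$.

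The hard part is exhibiting, for each $\sigma\in T_{k,n}$, an explicit arc $\{(\veps,U_\bullet(\veps))\}\subset\underline\Gr_{k,n}$ with $U_\bullet(\veps)\in\Gr_{k,n}(\veps)$ for $\veps\ne 0$ whose $\veps\to 0$ limit lies in the open $B$-orbit of $Y_\sigma$. I would produce such an arc from a torus-fixed point $p_\sigma\in\Gr_{k,n}\simeq\Gr_{k,n}(1)$, transported along the isomorphism of the first paragraph, and identify its limit with the juggling-pattern fixed point of $Y_\sigma$ via the combinatorial dictionary between $T_{k,n}$ and $k$-subsets. Alternatively, one can embed $\underline\Gr_{k,n}$ into the global affine Grassmannian for $GL_n$, known to be flat over $\bA^1$ by \cite{AB24,Ga01,PR08}, and transfer flatness by matching the Schubert stratifications of the two special fibers.
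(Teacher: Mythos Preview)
The paper does not prove this lemma; it records it as a summary of known facts with citations to \cite{FLP22,Go01}. Your sketch therefore supplies more than the paper does, and it is essentially correct.

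Your flatness argument in fact anticipates the proof of Theorem~\ref{thm:flat} later in the paper, which runs the same scheme for the global positroid varieties: reduce flatness over the curve $\bA^1$ to the statement that the closure of $\pi^{-1}(\bA^1\setminus\{0\})$ contains every component of the special fiber, then witness each component by a torus-fixed point $p(S)$ that (i) lies in that component alone and (ii) visibly sits in $\Gr_{k,n}(\veps)$ for every $\veps$. Your ``transport $p_\sigma$ along the isomorphism'' is exactly this device in the maximal case $\bfJ=([k],\ldots,[k])$; compare formula~\eqref{eq:p(S)}. One terminological quibble: the limit of such a transported fixed point is again a torus-fixed point, so it is not a generic point of the open cell but rather the Iwahori-fixed point \emph{in} that cell. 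What makes the argument work is that the elements $z^{\sigma\omega_k}$, $\sigma\in S_n$, all have the same length and are hence pairwise Bruhat-incomparable, so each $p_{z^{\sigma\omega_k}}$ lies in exactly one component of $\Gr_{k,n}(0)$. Your alternative route via the known flatness of the global affine Grassmannian is also legitimate and is the viewpoint of Section~\ref{sec:gaG}.
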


\subsection{Global affine Grassmannians}\label{sec:gaG}
The variety $\underline\Gr_{k,n}$ can be seen as a global Schubert variety 
inside the global affine Grassmannian \cite{AB24,Zhu14}.
The  global affine Grassmannians are defined in all types, but we will only need the type
$A$ case, where one has the lattice realization \cite{Kac85,Kum02}. Let us recall the formalism.

Let $\bK=\bC((t))$ be the field of Laurent series in a variable $t$ and let $\bO=\bC[[t]] $
be the ring of formal power series. 
We fix an $n$-dimensional vector space $V$ with a basis $v_1,\dots,v_n$.
The $GL_n$ type affine Grassmannian $\aGr$ consists of lattices in the space $V(\bK)=V\T\bK$, i.e.
free $\bO$ modules of rank $n$ and is identified with the quotient $GL_n(\bK)/GL_n(\bO)$. 
A valuation $\nu(L)$ of a lattice $L$ is the minimal power of $t$ showing up in the determinant
of the matrix with columns $f_1,\dots,f_n$, where $\{f_i\}_{i=1}^n$ is an $\bO$-basis 
of $L$. A chain of lattices is a sequence of lattices
\[
L_0\subset L_1\subset \dots \subset L_{n-1}\subset L_n = t^{-1}L_0, \quad  \dim L_{i+1}/L_i=1. 
\] 
In particular, $\nu(L_{i+1})=\nu(L_i)-1$. The set of lattice chains is the affine flag variety 
of type $GL_n$.  It is identified with the quotient $GL_n(\bK)/\bI$, where $\bI\subset GL_n(\bO)$ 
is the Iwahori group -- the preimage of the (upper-triangular) Borel subgroup $B\subset GL_n$
with respect to the $t=0$ evaluation map $GL_n(\bO)\to GL_n$. 

The global affine Grassmannian is a family over $\bA^1$ such the special fiber over $0$ is
isomorphic to the affine flag variety $\aFl$ and all other fibers are affine Grassmannians $\aGr$.
This infinite-dimensional family admits finite-dimensional subfamilies, whose general fibers are
(spherical) Schubert varieties. In this paper we only treat a very particular case of classical
Grassmannians, corresponding to minuscule weights. We formulate the construction
in this particular case.

For $b=0,\dots,n-1$ let $\theta_b:V(\bK)\to V(\bK)$ be the following  $\bK$-linear 
maps 
\[
\theta_b(v_i)=\begin{cases}
(t+\veps) v_{i}, & i=b+1,\\ v_{i}, & i\ne b+1	
\end{cases}
\]
(note that the composition of all the maps $\theta_b$ is equal to $(t+\veps)\mathrm{Id}$).
The global affine Grassmannian is formed by collections $(\veps,L_0,\dots,L_{n-1})$, where $\veps$ is a complex numbers, $L_b$ are lattices of equal valuation ($\nu(L_0)=\dots=\nu(L_{n-1})$) and $\theta_b L_b\subset L_{b+1}$ for all $b$ (including $b=n-1$ with $L_n=L_0$).
A general $\veps\ne 0$ fiber is isomorphic to the affine Grassmannian, since 
$t+\veps$ is invertible in $\bK$. The special $\veps=0$ fiber is isomorphic to the affine 
flag variety, the identification sends  $(L_0,\dots,L_{n-1})$ to
\[
L_0\subset (\theta_0)^{-1}_{\veps=0}L_1\subset (\theta_0\theta_1)^{-1}_{\veps=0}L_1\subset\dots \subset t^{-1}L_0.
\]
  
We consider a subvariety $\underline{\Gr}_{k,n}$ in the global affine Grassmannian (to be identified with the family from the previous section) 
cut out by the following conditions:
\[
V(\bO)\subset L_b \subset t^{-1}V(\bO),\ \nu(L_b)=-k.
\]
Clearly, $\underline{\Gr}_{k,n}$ sits inside $\bA^1\times \prod_{b\in\bZ_n} \Gr_{k,n}$ (with the ambient space being $t^{-1}V(\bO)/V(\bO)$).
Since all lattices $L_b$ contain $V(\bO)$, the conditions $\theta_b L_b\subset L_{b+1}$ after replacing lattices with the corresponding points $\bar L_b\in\Gr_{k,n}$  read as $\psi_b \bar L_b\subset \bar L_{b+1}$, where 
\[
\psi_b(v_i)=\begin{cases} v_{i}, & i\ne b+1,\\ 0, & i=b+1.\end{cases}.
\]
Now one passes from a collection $(\bar L_b)_b$ to the collection $(U_b)_b$
by a simple base change interwining the maps $\psi_b$ and $M_{b\to b+1}(0)$ \eqref{eq:themap}
(see Remark \ref{rem:Grneck}). Namely, at vertex $b=0,\dots,n-1$ one sends $v_i$ to $w_{i-b}$ 
(the non-positive indices to be increased by $n$). 
 
The affine flag variety $\aFl$ of type $GL_n$ \cite{Kum02} admits a decomposition into the 
Schubert cells $\mathring Z_\tau$,
where $\tau$ is an element of the extended affine Weyl group $W^{ext}$ -- 
the semi-direct product of $\bZ^n$ 
and the symmetric group $S_n$ (see \cite{BB05}). For $\lambda\in\bZ^n$ and $\sigma\in S_n$ we denote the corresponding
element of $W^{ext}$ by $\sigma z^\lambda$.
Each cell $\mathring Z_\tau$ is an orbit of the Iwahori subgroup
$\bI$ through the torus fixed point $p_\tau\in \aFl$. The affine Schubert varieties $Z_\tau$ are the 
closures of the Schubert cells. 
 
The juggling variety $\Gr_{k,n}(0)$ inside the affine flag variety $\aFl$ is equal
to the union of affine Schubert varieties \cite{Go01,HL15}. The Schubert varieties showing up 
in the decomposition of $\Gr_{k,n}(0)$
correspond to the elements $\tau=z^{\sigma\omega_k}$, where $\sigma\in S_n$ and $\omega_k=(\underbrace{1,\dots,1}_k,0,\dots,0)$ is the $k$-th fundamental weight; 
in particular, there  are exactly $\binom{n}{k}$ irreducible components, each of dimension 
$k(n-k)$. As an immediate corollary one gets a cellular decomposition of the juggling variety 
$\Gr_{k,n}(0)$ into the union of open Schubert cells $\mathring Z_\tau$ such that there exists 
a permutation $\sigma\in S_n$ such that $\tau\le z^{\sigma\omega_k}$ with respect to the standard 
Bruhat order on the  extended affine Weyl group. It is known (and follows immediately from the
explicit realization given above) that the cells in the Schubert decomposition of the juggling
variety are labeled by the $(k,n)$-juggling patterns.

\section{Global positroid varieties}\label{sec:global}
\subsection{The family}
Recall the embedding 
$\underline\Gr_{k,n}\subset \bA^1\times \prod_{b\in\bZ_n} \Gr_{k,n}$, which leads to the 
embedding of the fibers of the global Grassmannian to the 
product of projective spaces $\prod_{b\in\bZ_n} \bP(\Lambda^k W)$.

\begin{dfn}
For a juggling pattern $\bfJ$ the global positroid variety   
$\underline\Pi_{\bfJ}$ is a subvariety of $\underline\Gr_{k,n}$ consisting of collections $(\veps,U_0,\dots,U_{n-1})$
such that $U_b\in X^-_{J_b}$ for all $b\in\bZ_n$. 
The fibers of the natural projection map
$\underline\Pi_\bfJ\to\bA^1$ will be denoted by $\Pi_{\bfJ}(\veps)$, $\veps\in\bA^1$.
\end{dfn}

One has the following simple observation.

\begin{lem}
Let $\veps\ne 0$. Then $\Pi_\bfJ(\veps)$ is isomorphic to $\Pi_{\bfJ}$.
\end{lem}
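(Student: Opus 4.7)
The plan is to identify $\Gr_{k,n}(\veps)$ with $\Gr_{k,n}$ via the projection $(U_b)_b \mapsto U_0$ and then match the cut-out Schubert conditions with the intersection description \eqref{eq:intSch} of the closed positroid variety $\Pi_\bfJ$.

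First I would observe that for $\veps\ne 0$ each linear map $M_{b\to b+1}(\veps)$ from \eqref{eq:themap} is an isomorphism, so the inclusion $M_{b\to b+1}(\veps)U_b\subset U_{b+1}$ together with $\dim U_b=\dim U_{b+1}=k$ forces $U_{b+1}=M_{b\to b+1}(\veps)U_b$. Hence the whole tuple is recovered from $U_0$ via $U_b=M_\veps^b(U_0)$, where $M_\veps^b$ denotes the composition $M_{b-1\to b}(\veps)\circ\cdots\circ M_{0\to 1}(\veps)\colon W\to W$, and $(\veps,U_0,\dots,U_{n-1})\mapsto U_0$ is an isomorphism $\Gr_{k,n}(\veps)\cong \Gr_{k,n}$. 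Under this isomorphism the global positroid variety becomes
\[
\Pi_\bfJ(\veps)\;\cong\;\bigcap_{b=0}^{n-1}(M_\veps^b)^{-1}\bigl(X^-_{J_b}\bigr)\subset \Gr_{k,n}.
\]

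Next I would compare $M_\veps^b$ with $\varphi^b$. A direct check shows that $M_{b\to b+1}(\veps)=D_\veps\circ\varphi$, where $D_\veps=\mathrm{diag}(1,\dots,1,\veps)$ in the basis $w_1,\dots,w_n$. Since conjugation by the cyclic permutation $\varphi$ sends diagonal matrices to diagonal matrices, an easy induction yields $M_\veps^b=T_b\circ\varphi^b$ for some invertible diagonal matrix $T_b\in GL(W)$ (invertibility uses $\veps\ne 0$). The opposite Schubert variety $X^-_{J_b}$ is the closure of a $B^-$-orbit, hence in particular invariant under the diagonal torus and under the action of the full group of invertible diagonal matrices on $\Gr_{k,n}$; consequently $T_b^{-1}(X^-_{J_b})=X^-_{J_b}$ and
\[
(M_\veps^b)^{-1}\bigl(X^-_{J_b}\bigr)=\varphi^{-b}\bigl(T_b^{-1}(X^-_{J_b})\bigr)=\varphi^{-b}\bigl(X^-_{J_b}\bigr).
\]

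Combining the two paragraphs with the closed version of \eqref{eq:intSch} yields $\Pi_\bfJ(\veps)\cong\bigcap_b\varphi^{-b}(X^-_{J_b})=\Pi_\bfJ$. There is no genuine obstacle here; the proof is essentially an unfolding of definitions plus torus-invariance of Schubert varieties. The only points requiring care are the invertibility of $M_{b\to b+1}(\veps)$ for $\veps\ne 0$, and the observation that the rescalings $T_b$ belong to the (diagonal) torus and therefore preserve each $X^-_{J_b}$.
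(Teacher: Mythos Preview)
Your proof is correct and is precisely the unfolding of what the paper's one-line argument ``The claim follows from \eqref{eq:intSch}'' means: identify $\Gr_{k,n}(\veps)$ with $\Gr_{k,n}$ via $U_0$, factor $M_\veps^b=T_b\varphi^b$ with $T_b$ diagonal, and use torus-invariance of the $X^-_{J_b}$ to reduce to the closed form of \eqref{eq:intSch}. The approach is the same; you have simply supplied the details the paper omits.
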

\begin{proof}
The claim follows from \eqref{eq:intSch}.
\end{proof}

Let us consider the following Examples (see also Examples \ref{ex:full} and \ref{ex:zero-dim}). 

\begin{example}
Let $J_b=\{1,\dots,k\}$ for all $b\in \bZ_n$. Then $\underline\Pi_{\bfJ}=\underline\Gr_{k,n}$, since $X^-_{J_b}=\Gr_{k,n}$ for all $b$.
\end{example}

\begin{example}
For a set $S\subset \bZ_n$ of cardinality $k$ and consider the juggling pattern $\bfJ(S)$ from Example \ref{ex:zero-dim}. Then for any $\veps$ the variety $\Pi_{\bfJ}(\veps)$ 
is a single point. The component $U_b$ of this point is given by 
$U_b=\mathrm{span}\{w^{(b)}_{s-b},\ s\in S\}$, where $s-b$ is understood as $n-b+s$ if $s-b\le 0$ 
(i.e. the subscripts of the vectors $w_\bullet$ are always inside $\{1,\dots,n\}$). 
We note that $U_b$ does not depend on $\veps$. 
\end{example}

We postpone the study of the topological and algebro-geometric properties of the special fibers 
$\Pi_\bfJ(0)$ till section  \ref{sec:special} and turn to the description of the defining ideals.

\subsection{Ideal of relations}

For a juggling pattern $\bfJ$ the image of the embedding 
$\underline \Pi_\bfJ\subset \bA^1\times \prod_{b\in\bZ_n} \bP(\Lambda^k W)$ is described
as a set of zeroes of certain radical ideal 
$\underline{\mathcal{I}}_\bfJ\subset \bC[\veps,\Delta^{(b)}_I]$, where $\Delta^{(b)}_I$ are the
Pl\"ucker coordinates in the $b$-th factor $\bP(\Lambda^k W)$.
The global Pl\"ucker algebra $\underline{\emph{Pl}}_\bfJ$ is defined as the quotient 
$\bC[\veps,\Delta^{(b)}_I]/\underline{\mathcal{I}}_\bfJ$.

Recall the fibers $\Pi_\bfJ(\veps)$ of the global family. One has a natural embedding $\Pi_\bfJ(\veps)\subset \prod_{b\in\bZ_n} \bP(\Lambda^k W)$ with
the defining homogeneous ideal ${\mathcal{I}}_\bfJ(\veps)\subset \bC[\Delta^{(b)}_I]_{I,b}$ (${\mathcal{I}}_\bfJ(\veps)$ defines the reduced scheme structure).  
We denote the corresponding quotients (the Pl\"ucker algebras of individual fibers) by $\emph{Pl}_\bfJ(\veps)$.
Also for $a\in\bZ_n$ let   
${\mathcal{I}}^{(a)}_{k,n}\subset \bC[\Delta^{(a)}_I]_{I}$
be the classical  Pl\"ucker ideal, responsible for the Grassmannian 
$\Gr_{k,n}\subset \bP(\Lambda^kW^{(a)})$.

For $I\in\binom{[n]}{k}$ and $c\in\bZ_n$ 
we choose a representative $\bar c = 0,\dots,n-1$ 
and  introduce the notation 
\[
I-c = \{i-\bar c:\ i\in I, i>\bar c\}\sqcup \{i-\bar c+n:\ i\in I, i\le \bar c\}\subset [n]
\]
and
\[
d_c(I) = |\{i\in I: \ i \le \bar c\}|.
\]

\begin{dfn}\label{dfn:ideal}
Let $\mathcal{I}_\bfJ$ be the ideal in the polynomial ring in $\veps$ and all
the Pl\"ucker variables $\Delta_I$, $I\in\binom{[n]}{k}$ generated by the following relations:
\begin{enumerate}
\item ${\mathcal{I}}^{(a)}_{k,n}$ for all $a\in\bZ_n$, \label{rel:Gr}
\item $\Delta^{(a)}_I$ for all $a\in\bZ_n$ and $I\not\succeq J_a$,\label{rel:Sch}
\item $\Delta^{(a)}_I \Delta_J^{(a+c)} - \veps^{d_c(J+c)-d_c(I)} \Delta^{(a)}_{J+c} \Delta_{I-c}^{(a+c)}$ for  $a,c\in \bZ_n$, $d_c(J+c)-d_c(I)\ge 0$, \label{rel:veps1}
\item $\veps^{-d_c(J+c)+d_c(I)} \Delta^{(a)}_I \Delta_J^{(a+c)} -  \Delta^{(a)}_{J+c} \Delta_{I-c}^{(a+c)}$ for  $a,c\in \bZ_n$, $d_c(J+c)-d_c(I) < 0$. \label{rel:veps2}	
\end{enumerate}  
\end{dfn}

\begin{rem}\label{rem:veps=0}
Relations \eqref{rel:veps1},\eqref{rel:veps2} at $\veps=0$ are given by
\begin{gather*}
\Delta^{(a)}_I \Delta_J^{(a+c)}=0, \ d_c(J+c)-d_c(I)>0,\\
\Delta^{(a)}_I \Delta_J^{(a+c)} - \Delta^{(a)}_{J+c} \Delta_{I-c}^{(a+c)}, \ d_c(J+c)-d_c(I)=0,\\
\Delta^{(a)}_{J+c} \Delta_{I-c}^{(a+c)}, \ d_c(J+c)-d_c(I)<0.
\end{gather*}
\end{rem}

\begin{prop}\label{prop:relations}
For any value of $\veps$ the fiber $\Pi_\bfJ(\veps)$ coincides with the zero set of 
the ideal generated by the specializations of the relations from Definition \ref{dfn:ideal}.  
\end{prop}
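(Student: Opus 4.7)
The plan is to decompose membership $(U_0, \dots, U_{n-1}) \in \Pi_\bfJ(\veps)$ into three conditions and match each with one family of relations from Definition~\ref{dfn:ideal}: (a) each $U_b \in \Gr_{k,n}$, matching \eqref{rel:Gr}; (b) $U_b \in X^-_{J_b}$ for every $b$, matching \eqref{rel:Sch}; and (c) the quiver inclusions $M(\veps) U_b \subseteq U_{b+1}$, matching \eqref{rel:veps1}--\eqref{rel:veps2}. Parts (a) and (b) are classical: the Pl\"ucker ideal $\mathcal{I}^{(a)}_{k,n}$ cuts out $\Gr_{k,n}$ set-theoretically in $\bP(\Lambda^k W^{(a)})$, and the opposite Schubert variety $X^-_{J_b}$ is the zero locus inside $\Gr_{k,n}$ of the Pl\"ucker coordinates $\Delta^{(a)}_I$ with $I \not\succeq J_b$, both as recalled in the Preliminaries.

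The main content is the equivalence between (c) and \eqref{rel:veps1}--\eqref{rel:veps2}, which rests on a direct Pl\"ucker computation. Iterating \eqref{eq:themap} gives, for $c \in \bZ_n$ with canonical representative $\bar c \in \{0, \dots, n-1\}$,
\[
M(\veps)^c(w_i^{(a)}) = \veps^{\mathbf{1}_{i \le \bar c}}\, w_{i-c}^{(a+c)},
\]
with indices reduced modulo $n$. Passing to $k$-th exterior powers yields $(\Lambda^k M(\veps)^c)(w_I^{(a)}) = \pm\, \veps^{d_c(I)}\, w_{I-c}^{(a+c)}$, the sign coming from the standard-order reordering of $I - c$. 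For a tuple already satisfying (a), the quiver condition (c) is equivalent by iteration to $M(\veps)^c U_a \subseteq U_{a+c}$ for every $a$ and $c$, which is in turn equivalent to projective proportionality of $(\Lambda^k M(\veps)^c)[U_a]$ and $[U_{a+c}]$ in $\bP(\Lambda^k W^{(a+c)})$. Writing this proportionality as the vanishing of all $2 \times 2$ cross products and substituting the displayed formula produces, after reindexing $I \mapsto I - c$ and absorbing signs, precisely the bilinear identities \eqref{rel:veps1}--\eqref{rel:veps2}; the converse is read off by running the same implication backwards.

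The main delicate point is the $\veps = 0$ specialization, where $M(0)^c$ has a nontrivial kernel and $(\Lambda^k M(0)^c)[U_a]$ may vanish even for a $k$-dimensional $U_a$. In that regime the single proportionality identity degenerates into the three-case form of Remark~\ref{rem:veps=0}, and one must check that these jointly still force $M(0) U_b \subseteq U_{b+1}$ set-theoretically: the monomial relations must kill exactly those Pl\"ucker coordinates of $M(0) U_b$ with no counterpart in $U_{b+1}$, while the equal-exponent binomials equate the remaining ones. I expect this verification, together with the sign bookkeeping for the reordering of $I - c$, to be the most error-prone step; it can be pinned down by testing on the torus-fixed points $p_\sigma$.
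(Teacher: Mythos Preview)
Your decomposition into conditions (a), (b), (c) and the matching with relation families \eqref{rel:Gr}, \eqref{rel:Sch}, \eqref{rel:veps1}--\eqref{rel:veps2} is exactly the structure of the paper's argument, and your treatment of the $\veps \ne 0$ case is correct: since $M(\veps)$ is invertible, $M(\veps)^c U_a$ is always $k$-dimensional, so the inclusion $M(\veps)^c U_a \subseteq U_{a+c}$ is equivalent to equality of Pl\"ucker points, hence to the vanishing of the $2\times 2$ minors you describe.

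The gap is at $\veps = 0$, and you have located it but not closed it. When $U_a$ meets the kernel of $M(0)^c$ (already for $c=1$ this happens whenever $w_1^{(a)} \in U_a$), the vector $(\Lambda^k M(0)^c)[U_a]$ vanishes identically and the proportionality relations for that pair $(a,c)$ become vacuous; they no longer witness the containment $M(0)^c U_a \subseteq U_{a+c}$. One must then argue that the remaining relations, together with the Pl\"ucker relations at each vertex, still force the containment for arbitrary $U_a$. This is a genuine statement about quiver Grassmannians---that these bilinear Pl\"ucker-type relations cut out the subrepresentation locus set-theoretically even when the arrow maps are not injective---and testing on torus-fixed points $p_\sigma$ does not pin it down: those are coordinate subspaces, whereas the failure mode involves arbitrary $U_a$ meeting the kernel. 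The paper does not carry out this verification by hand either; it invokes the main result of \cite{LW19} (Lorscheid--Weist), which proves exactly this fact for general quiver Grassmannians. Without that citation or an equivalent argument, your $\veps=0$ case remains incomplete.
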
 
\begin{proof}
We first consider the $\veps\ne 0$ part of the family. Relations \eqref{rel:Gr} from
Definition \ref{dfn:ideal} cut out the product of Grassmannians inside the product of
projectivized wedge powers. Relations \eqref{rel:Sch} are responsible for the intersection
of Schubert varieties showing up in the description \eqref{eq:intSch} of the positroid varieties.
Finally, relations \eqref{rel:veps1}, \eqref{rel:veps2}  connect the points in the Grassmannians 
attached to different vertices according to map  \eqref{eq:themap}.

Now let us work out the case $\veps=0$. Relations \eqref{rel:Gr} and \eqref{rel:Sch} cut out
the product of Schubert varieties inside the product of projectivized wedge powers $\bP(\Lambda^kW)$.
So it suffices to show that the $\veps=0$ specialization of relations 
\eqref{rel:veps1}, \eqref{rel:veps2} cut out the zero fiber $\Gr_{k,n}(0)$. 
Recall that $\Gr_{k,n}(0)$ consists of collections of 
subspaces ${\bf U}=(U_b)_{b\in\bZ_n}$ such that $M_{b\to b+1}(0)$ sends $U_b$ to $U_{b+1}$. 
Now we apply the main result of \cite{LW19}.  
\end{proof}

The Pl\"ucker algebra $\underline{\emph{Pl}}_\bfJ$ (a.k.a. multi-homogeneous coordinate ring of
the global positroid variety) is multi-graded 
\[
\underline{\emph{Pl}}_\bfJ = \bigoplus_{\bfm\in \bZ_{\ge 0}^{\bZ_n}} \emph{Pl}_\bfJ(\bfm)
\] 
with each homogeneous component being a module over the polynomial ring in the variable $\veps$. 
In Theorem \ref{thm:flat} we show that the positroid families are flat. In the Appendix
we construct flat bases for the homogeneous components $\emph{Pl}_\bfJ(\bfm)$ of the Pl\"ucker 
algebra for $k=1$ in terms of certain $\bfJ$-admissible monomials. For an arbitrary $k$ one 
needs a colored analogue of the semi-standard tableaux, which is missing so far.   

The following conjecture is proved in the Appendix for $k=1$.

\begin{conj}\label{conj:reduced}
The ideal generated by the relations from Proposition \ref{prop:relations} produces 
the reduced scheme structure of $\Pi_\bfJ$. i.e. coincides with  ${\mathcal{I}}_\bfJ$.
For any $\veps$ the ideal generated by the specializations of relations from 
 Proposition \ref{prop:relations}
gives the reduced scheme structure of the fiber $\Pi_\bfJ(\veps)$.
\end{conj}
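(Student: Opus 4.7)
Proposition~\ref{prop:relations} already gives that $V(\mathcal{I}_\bfJ)=\underline{\Pi}_\bfJ$ and $V(\mathcal{I}_\bfJ(\veps_0))=\Pi_\bfJ(\veps_0)$ set-theoretically, so the content of the conjecture is radicality. The plan is to extend the standard-monomial argument carried out in the Appendix for $k=1$: exhibit an explicit $\bC[\veps]$-basis of $\underline{Pl}_\bfJ=\bC[\veps,\Delta^{(b)}_I]/\mathcal{I}_\bfJ$ whose cardinality in each multi-degree $\bfm\in\bZ_{\ge 0}^{\bZ_n}$ matches the Hilbert function of the coordinate ring of the reduced variety $\underline{\Pi}_\bfJ$. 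Matching cardinalities forces $\mathcal{I}_\bfJ$ to be radical; given flatness (Theorem~\ref{thm:flat}) and generic reducedness of the family (clear since $\Pi_\bfJ(\veps_0)\simeq\Pi_\bfJ$ for $\veps_0\ne 0$ is a classical positroid variety, hence reduced by \cite{KLS13}), specialization at any $\veps_0\in\bA^1$ then yields the fiber statement.

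The candidate basis should consist of \emph{$\bfJ$-admissible colored monomials}: tuples $(T^{(0)},\dots,T^{(n-1)};e)$ where each $T^{(b)}$ is a semi-standard Young tableau with $k$ rows and $m_b$ columns whose column sets $I$ all satisfy $I\succeq J_b$ (encoding both \eqref{rel:Sch} and the within-color Pl\"ucker relations \eqref{rel:Gr}), together with an integer $e=e(\mathbf{T})\ge 0$ chosen as the minimal $\veps$-exponent attainable by moving columns between colors via \eqref{rel:veps1}-\eqref{rel:veps2}. The associated monomial is $\veps^{e}\prod_b\prod_I\Delta^{(b)}_I$ with $I$ ranging over the columns of $T^{(b)}$; for $k=1$ this specializes to the admissible monomials of the Appendix. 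A straightening algorithm would act in three layers: apply \eqref{rel:Gr} to put each color into semi-standard form, apply \eqref{rel:Sch} to annihilate columns violating $I\succeq J_b$, and apply \eqref{rel:veps1}-\eqref{rel:veps2} to transport columns between adjacent colors until the canonical minimizer of the $\veps$-exponent is reached. Termination would follow from strict decrease under a lex-type monovariant weighted by color, row position and residual $\veps$-degree.

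Linear independence of the admissible monomials could be verified by specializing at generic $\veps_0\ne 0$, where the quotient becomes the Pl\"ucker algebra of the classical positroid variety $\Pi_\bfJ$ and standard-monomial theory for positroid varieties (\cite{KLS13,AGH23,Lam19,EH24}) supplies a basis to match against via the base change of Remark~\ref{rem:Grneck}. The hard part, flagged immediately before the conjecture, is entirely combinatorial and lives in the second paragraph: producing the correct admissibility condition and the exponent function $e(\mathbf{T})$ for $k\ge 2$. When $k=1$ each column is a single element, the transport relations collapse to plain cyclic shifts, and $e(\mathbf{T})$ admits a transparent closed form (worked out in the Appendix). For $k\ge 2$ the interplay of within-color Pl\"ucker straightening with cross-color transport creates cancellations of powers of $\veps$ whose cumulative effect is not controlled by any coloring of semi-standard tableaux currently available, and finding such a combinatorial model appears to be the single obstruction to the conjecture in general.
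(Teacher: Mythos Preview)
The statement is a conjecture that the paper proves only for $k=1$ (in the Appendix); your proposal is not a proof for general $k$, and you correctly identify the missing ingredient---the colored admissibility condition, the exponent function $e(\mathbf{T})$, and the termination of straightening for $k\ge 2$---as the obstruction. On that combinatorial side your outline and the paper's discussion agree.

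There is, however, a genuine gap in your reduction of the fiber statement to the global one, already visible at $k=1$. You assert that flatness (Theorem~\ref{thm:flat}) together with generic reducedness yields reducedness of the special fiber, but this inference is false in general: a flat family over $\bA^1$ with integral total space can have non-reduced special fiber (for instance $\mathrm{Spec}\,\bC[x,y]/(y^2-x^3)\to\bA^1_x$ is flat with integral total space, yet the fiber over $0$ is $\mathrm{Spec}\,\bC[y]/(y^2)$). Your argument does establish that the global ideal $\mathcal{I}_\bfJ$ is radical once the admissible monomials form a free $\bC[\veps]$-basis of the quotient: freeness kills $\veps$-torsion, so the surjection onto the reduced coordinate ring, being an isomorphism for $\veps\ne 0$, is an isomorphism everywhere. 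But after reducing modulo $\veps$ you only know that the admissible monomials form a $\bC$-basis of $\bC[\Delta^{(b)}_I]/\mathcal{I}_\bfJ(0)$; to conclude that this ring is reduced you must show they remain linearly independent as functions on the \emph{reduced} variety $\Pi_\bfJ(0)$. The paper's Appendix does precisely this for $k=1$: given a nontrivial linear combination $C$ of admissible monomials it picks a monomial with a maximal lower index $\ell^{(b_0)}_{r_0}$ and exhibits the irreducible component of $\Pi_\bfJ(0)$ attached to the vertex $b_0+r_0$ on which $C$ does not vanish, using the weight grading. Without such a direct argument at $\veps=0$ (or without assuming Conjecture~\ref{conj:higercoh}, which would let you match multi-graded Hilbert functions of the reduced generic and special fibers), the fiber part of the conjecture does not follow from your outline.
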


\section{The special fiber}\label{sec:special}
Let $\rot: \mathrm{Jugg}_{k,n}\to \mathrm{Jugg}_{k,n}$ be the rotation automorphism of the
set of juggling patterns:
\[
\rot (J_0,\dots,J_{n-1}) = (J_1,J_2,\dots,J_{n-1},J_0).
\]
Formula \eqref{eq:intSch} implies  that the isomorphism $\varphi: W\to W$ 
(a.k.a. $M_\bullet(1)$) induces the isomorphism of positroid varieties $\Pi_\bfJ\to \Pi_{\rot(\bfJ)}$.
Let $\mathrm{pr}_b$ be the natural projection map from the juggling variety 
$\Gr_{k,n}(0)$ to the $b$-th Grassmannian $\Gr_{k,n}$.

\begin{prop}\label{prop:product}
The degenerate positroid variety $\Pi_\bfJ(0)$ admits the following presentation in terms of the 
classical positroid varieties:
\[
\Pi_\bfJ(0) = \Gr_{k,n}(0)\bigcap \prod_{b\in\bZ_n} \Pi_{\rot^b(\bfJ)}.
\]  
\end{prop}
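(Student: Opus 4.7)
The plan is to prove the two inclusions separately and to transfer the generic statement to the special fiber via the closure assertion of Theorem \ref{thm:flat}.

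The inclusion $\supseteq$ is immediate. Taking $a=0$ in the description \eqref{eq:intSch} applied to $\rot^b(\bfJ)$, whose $0$-th entry is $J_b$, one has $\Pi_{\rot^b(\bfJ)}\subseteq X^-_{J_b}$. Hence $\prod_{b\in\bZ_n}\Pi_{\rot^b(\bfJ)}\subseteq \prod_{b\in\bZ_n} X^-_{J_b}$, and intersecting with $\Gr_{k,n}(0)$ yields $\Gr_{k,n}(0)\cap\prod_{b\in\bZ_n}\Pi_{\rot^b(\bfJ)}\subseteq \Pi_\bfJ(0)$ directly from the definition of the special positroid variety.

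For the reverse inclusion I would first verify the analog over the generic fibers. Fix $\veps\ne 0$ and $(U_b)\in\Pi_\bfJ(\veps)$. Since $M(\veps)$ is an isomorphism, $M(\veps)U_b\subset U_{b+1}$ forces equality, so $U_b=M(\veps)^b U_0$. An easy induction on $b$ gives the factorization $M(\veps)^b=\varphi^b\cdot D_b(\veps)$, where $D_b(\veps)=\mathrm{diag}(\underbrace{\veps,\ldots,\veps}_b,1,\ldots,1)$ is an invertible diagonal matrix. Combining the standard identities $\Delta_I(\varphi^b V)=\pm\Delta_{I+b}(V)$ and $\Delta_J(D_b V)=\prod_{j\in J}(D_b)_{jj}\,\Delta_J(V)$ yields
\[
\Delta_I(U_b)=\pm\,\veps^{d_b(I+b)}\,\Delta_{I+b}(U_0),
\]
and the scalar is nonzero because $\veps\ne 0$. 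Therefore the vanishing condition $\Delta_I(U_b)=0$ for $I\not\ge J_b$ is equivalent to $\Delta_K(U_0)=0$ whenever $K-b\not\ge J_b$; letting $b$ vary, this is exactly the defining condition for $U_0\in\Pi_\bfJ=\bigcap_b\varphi^{-b}X^-_{J_b}$ coming from \eqref{eq:intSch}. The very same Pl\"ucker computation, applied to $\varphi^a U_c=\varphi^{a+c}D_c(\veps)U_0$, shows $\varphi^a U_c\in X^-_{J_{a+c}}$ for every $a\in\bZ_n$, i.e.\ $U_c\in\bigcap_a \varphi^{-a}X^-_{J_{a+c}}=\Pi_{\rot^c(\bfJ)}$.

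It remains to pass to $\veps=0$. By Theorem \ref{thm:flat}, the closure of $\pi^{-1}(\bA^1\setminus\{0\})$ inside $\underline\Pi_\bfJ$ is the whole family, so every point of $\Pi_\bfJ(0)$ is a limit of points from generic fibers. The condition $U_b\in\Pi_{\rot^b(\bfJ)}$ is closed (it is defined by the vanishing of Pl\"ucker coordinates of $U_b$), hence survives limits, and together with $(U_b)\in \Gr_{k,n}(0)$ it yields $\Pi_\bfJ(0)\subseteq \Gr_{k,n}(0)\cap\prod_{b\in\bZ_n}\Pi_{\rot^b(\bfJ)}$, completing the proof. The main computational ingredient is the factorization $M(\veps)^b=\varphi^b D_b(\veps)$ and its effect on Pl\"ucker coordinates; the remainder is routine bookkeeping combined with the closure argument.
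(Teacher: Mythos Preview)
Your argument is correct, but it follows a genuinely different route from the paper's. For the inclusion $\subseteq$ the paper works \emph{directly at $\veps=0$}: given $(U_b)\in\Pi_\bfJ(0)$ it decomposes $W^{(b)}=W^{(b)}_<\oplus W^{(b)}_>$ according to the kernel of $M_{b\to a+b}(0)$, compares the effect of $\varphi^a$ and of $M_{b\to a+b}(0)$ on the various pieces of $U_b$, and from this comparison deduces the Schubert inequality $I_b\ge I_{a+b}$ needed for $\varphi^a U_b\in X^-_{J_{a+b}}$. Your approach instead handles the generic fiber, where the factorization $M(\veps)^b=\varphi^b D_b(\veps)$ with $D_b(\veps)\in T$ makes the claim immediate (Schubert varieties are torus-stable), and then invokes the closure assertion of Theorem~\ref{thm:flat} to specialize to $\veps=0$.

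The trade-off: your proof is shorter and avoids the somewhat delicate subspace bookkeeping, but it relies on Theorem~\ref{thm:flat}, which in the paper is proved \emph{after} Proposition~\ref{prop:product}. You should note explicitly that this does not create a circularity: the proof of Theorem~\ref{thm:flat} rests only on Theorem~\ref{thm:Richardson} (for $\dim\Pi_\bfJ(0)=\dim\Pi_\bfJ$) and on the explicit torus-fixed points $p(S)$, neither of which uses Proposition~\ref{prop:product} or its corollary. With that reordering observed, your argument stands. The paper's direct approach has the advantage of being self-contained at the special fiber and of illuminating concretely how $\varphi^a U_b$ relates to $U_{a+b}$ when the maps degenerate.
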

\begin{proof}
Let ${\bf U}=(U_0,\dots,U_{n-1})$ be a point of $\Gr_{k,n}(0)$ such that 
$U_b\in  \Pi_{\rot^b(\bfJ)}$. Then by definition $U_b\in X^-_{J_b}$ (i.e. $\Delta^{(b)}_I(U_b)=0$ unless 
$I\ge J_b$) and hence ${\bf U}\in \Pi_\bfJ(0)$. So 
$\Pi_\bfJ(0) \supset \Gr_{k,n}(0)\bigcap \prod_{b\in\bZ_n} \Pi_{\rot^b(\bfJ)}$.
Let us prove the reverse inclusion.

We are to show that $\mathrm{pr}_b \left(\Pi_\bfJ(0)\right) \subset \Pi_{\rot^b(\bfJ)}$.
Recall the map $\varphi:W\to W$ \eqref{eq:intSch}. Our goal is to show that for a point 
${\bf U}=(U_0,\dots,U_{n-1})\in \Pi_\bfJ(0)$ one has $\varphi^a(U_b)\in X^-_{J_{a+b}}$
for any $a,b\in\bZ_n$.
Let $M_{b\to a+b}(\veps)$ denote the composition $M_{a+b-1\to a+b}(\veps) \dots M_{b\to b+1}(\veps)$.
Since ${\bf U}\in \Pi_\bfJ(0)$, we know that 
$$M_{b\to a+b}(0)U_b\subset U_{a+b}\subset X^-_{J_{a+b}},\ 
M_{a+b\to b}(0)U_{a+b}\subset U_b\subset X^-_{J_b}.$$
We will deduce from these conditions that $\varphi^a(U_b)\in X^-_{J_{a+b}}$.

Recall the $n$-dimensional ambient spaces $W^{(b)}$ and $W^{(a+b)}$ (both isomorphic to 
the space $W$). We decompose
$W^{(b)}$ as $W^{(b)}_< \oplus W^{(b)}_>$ and $W^{(a+b)}$ as $W^{(a+b)}_< \oplus W^{(a+b)}_>$ 
with 
\begin{gather*}
W^{(b)}_< = \mathrm{span}(w_1^{(b)},\dots,w^{(b)}_a),\ 
W^{(b)}_> = \mathrm{span}(w^{(b)}_{a+1},\dots,w^{(b)}_n),\\
W^{(a+b)}_< = \mathrm{span}(w_1^{(a+b)},\dots,w_{n-a}^{(a+b)}),\ 
W^{(b)}_> = \mathrm{span}(w_{n-a+1}^{(a+b)},\dots,w_n^{(a+b)}).
\end{gather*}
The main properties of this decomposition we are going to use are as follows:
\begin{itemize}
\item the map $M_{b\to a+b}$ identifies $W^{(b)}_>$ with $W^{(a+b)}_<$,
\item the map $M_{a+b\to b}$ identifies $W^{(a+b)}_>$ with $W^{(b)}_<$,
\item the restrictions of $M_{b\to a+b}$ and $\varphi^a$ to $W^{(b)}_>$ coincide,
\item the restrictions of $M_{a+b\to b}$ and $\varphi^{n-a}$ to $W^{(a+b)}_>$ coincide.
\end{itemize}

Let $U_{b,>}=U_b\cap W^{(b)}_>$,  $U_{a+b,>}=U_{a+b}\cap W^{(a+b)}_>$ and similarly for 
$U_{b,<}$ and $U_{a+b,>}$. Since ${\bf U}\in\Gr(0)$,
one gets $\varphi^a U_{b,>}\subset U_{a+b,<}$ and $\varphi^{n-a} U_{a+b,>}\subset U_{b,<}$.
Now  let us consider the projection of $U_b$ to $U_{b,<}$ with the kernel being $U_{b,>}$.
The projection contains $\varphi^{n-a} U_{a+b,>}$; we fix a subspace $P_b\subset U_b$
such that the projection of $P_b$ to  $U_{b,<}$ does not intersect with $\varphi^{n-a} U_{a+b,>}$
and the direct sum of these two spaces coincides with the projection of $U_b$ to $U_{b,<}$.
Now we are ready to describe the space $\varphi^a(U_b)$: it is the direct sum of three spaces:
\[
\varphi^a(U_b) = \varphi^a(U_{b,>})\oplus U_{a+b,>} \oplus \varphi^a(P_b).
\]
Let us have a closer look at the space $U_{a+b}$. We already know that
\[
U_{a+b} \supset \left(\varphi^a(U_{b,>})\oplus U_{a+b,>}\right) + M_{b\to a+b}(P_b).
\] 
We also know that $M_{b\to a+b}(P_b)$ is contained in $U_{a+b,<}$, but we have no control 
on the dimension of $M_{b\to a+b}(P_b)$. In particular, the sum 
$\varphi^a(U_{b,>}) + M_{b\to a+b}(P_b)$ might be smaller than the projection of $U_{a+b}$
to $U_{a+b,<}$ along $U_{a+b.>}$. 
So in order to get the whole space $U_{a+b}$ one needs to add a complementary space 
$P_{a+b}\subset U_{a+b}$ such that the direct sum of $\varphi^a(U_{b,>}) + M_{b\to a+b}(P_b)$
and the projection of $P_{a+b}$ to $U_{a+b,<}$ gives the projection of $U_{a+b}$ to
$U_{a+b,<}$.

Let us fix tuples $I_b,I_{a+b}\in\binom{[n]}{k}$ such that $U_{a+b}\subset X^-_{I_{a+b}}$,
$\varphi^a U_{b}\subset X^-_{I_b}$ and the Schubert varieties $X^-_{I_b}$, $X^-_{I_{a+b}}$
are smallest with such property. It suffices to show that $I_b\ge I_{a+b}$. In fact, 
our goal is to show that $\varphi^a U_b\in X^-_{J_{a+b}}$. Since ${\bf U}\in \Pi_{\bfJ}(0)$,
one has $U_{a+b}\in X^-_{J_{a+b}}$, which is equivalent to $I_{a+b}\ge J_{a+b}$. Now the 
inequality $I_b\ge I_{a+b}$ implies $I_{b}\ge J_{a+b}$ and hence $\varphi^a U_b\in X^-_{J_{a+b}}$.

The spaces $U_{a+b}$ and $\varphi^a(U_b)$ have a common subspace 
$\varphi^a(U_{b,>})\oplus U_{a+b,>}$. We compare $M_{b\to a+b}(P_b)$ and 
$\varphi^a(P_b)$. The former sits inside $U_{a+b,<}$ and is equal to the projection of the latter
(the elements of $\varphi^a(P_b)$ do have nontrivial components from $U_{a+b,>}$).
In particular, the dimension of $M_{b\to a+b}(P_b)$ may be smaller than that of $\varphi^a(P_b)$.
To compensate the difference one adds $P_{a+b}$ which has a basis whose vectors have non-trivial
components in $U_{a+b,<}$. We conclude that $I_{a+b}\le I_b$.
\end{proof}

\begin{cor}\label{cor:proj}
The image of the projection $\mathrm{pr}_b(\Pi_\bfJ(0))$ is equal to $\Pi_{\rot^b(\bfJ)}$.
\end{cor}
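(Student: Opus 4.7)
\emph{Plan.} The containment $\mathrm{pr}_b(\Pi_\bfJ(0)) \subseteq \Pi_{\rot^b(\bfJ)}$ is immediate from Proposition \ref{prop:product}, since $\Pi_\bfJ(0)$ sits in $\prod_{c\in\bZ_n}\Pi_{\rot^c(\bfJ)}$. For the reverse inclusion I plan a short properness-plus-irreducibility argument on the total family $\underline{\Pi}_\bfJ$.

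Consider the morphism
\[
q = (\pi,\mathrm{pr}_b)\colon \underline{\Pi}_\bfJ \longrightarrow \bA^1 \times \Gr_{k,n},
\]
which is proper because $\underline{\Pi}_\bfJ$ is closed in $\bA^1 \times \prod_{c\in\bZ_n}\bP(\Lambda^k W)$; its image is therefore closed. The image in fact lies in $\bA^1 \times \Pi_{\rot^b(\bfJ)}$: at $\veps = 0$ by Proposition \ref{prop:product}, and at $\veps \ne 0$ by combining \eqref{eq:intSch} with the identification $\Pi_\bfJ(\veps)\cong \Pi_\bfJ$. At $\veps = 1$ this identification takes $\mathrm{pr}_b$ to the iterated isomorphism $\varphi^b\colon \Pi_\bfJ \xrightarrow{\sim} \Pi_{\rot^b(\bfJ)}$ obtained from the isomorphism $\varphi\colon \Pi_\bfJ \xrightarrow{\sim} \Pi_{\rot(\bfJ)}$ noted just above Proposition \ref{prop:product}; hence $\mathrm{Im}(q)$ contains the slice $\{1\}\times \Pi_{\rot^b(\bfJ)}$.

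Next I would invoke Theorem 1.1 to deduce that $\underline{\Pi}_\bfJ$ is irreducible of dimension $1 + \dim \Pi_\bfJ$: the restriction of the family to $\bA^1 \setminus \{0\}$ is irreducible (all fibers are isomorphic to the irreducible variety $\Pi_\bfJ$), and by the closure statement of Theorem 1.1 its closure is the whole family. Consequently $\mathrm{Im}(q)$ is irreducible and closed, strictly contains $\{1\}\times \Pi_{\rot^b(\bfJ)}$ (since it surjects onto $\bA^1$), and sits inside the irreducible $\bA^1\times \Pi_{\rot^b(\bfJ)}$ of dimension $1+\dim \Pi_\bfJ$. A dimension count forces $\mathrm{Im}(q) = \bA^1\times \Pi_{\rot^b(\bfJ)}$, and restricting to the $\veps = 0$ slice yields the desired equality $\mathrm{pr}_b(\Pi_\bfJ(0)) = \Pi_{\rot^b(\bfJ)}$.

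The essential input beyond Proposition \ref{prop:product} is irreducibility of the total family, supplied by the closure statement of Theorem 1.1; everything else is dimension bookkeeping together with the easy surjectivity at the single value $\veps = 1$, so no serious obstacle is anticipated.
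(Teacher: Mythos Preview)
Your argument is correct and genuinely different from the paper's. The paper obtains the surjection combinatorially: it appeals to the cell decomposition of the juggling variety $\Gr_{k,n}(0)$ into cells $C_{\bfJ'}$ whose $b$-th projections are Schubert cells $\mathring X_{J'_b}$, and then argues that every needed Schubert cell is hit. Your route is purely geometric: closedness of the image by properness, irreducibility of the total family via the closure statement of Theorem~\ref{thm:flat}, and a dimension count against $\bA^1\times \Pi_{\rot^b(\bfJ)}$. Two small points to tighten. First, for $\veps\ne 0$ the inclusion $\mathrm{pr}_b(\Pi_\bfJ(\veps))\subset \Pi_{\rot^b(\bfJ)}$ is not literally \eqref{eq:intSch}; one needs that $M(\veps)=\varphi\cdot\mathrm{diag}(\veps,1,\dots,1)$, so $M(\veps)^c$ and $\varphi^c$ differ by a torus element, and the varieties $\varphi^{-c}X^-_{J}$ are $T$-invariant since $\varphi$ normalizes $T$. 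Second, ``all fibers irreducible'' is not by itself enough to conclude $\pi^{-1}(\bA^1\setminus\{0\})$ is irreducible; the clean way is the isomorphism $(\veps,(U_c)_c)\mapsto(\veps,U_0)$ onto $(\bA^1\setminus\{0\})\times\Pi_\bfJ$ (which the same torus argument shows is well-defined). Finally, be aware that you are invoking Theorem~\ref{thm:flat}, which in the paper is proved \emph{after} Corollary~\ref{cor:proj}; there is no circularity (the proofs of Theorems~\ref{thm:Richardson} and~\ref{thm:flat} do not use Corollary~\ref{cor:proj}), but this reordering should be flagged. The trade-off: the paper's proof is self-contained at this point in the text, while yours is slicker and avoids the cell machinery at the cost of a forward reference.
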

\begin{proof}
Thanks to Proposition \ref{prop:product} it suffices to show that the projection of
the juggling variety to each factor $\Gr_{k,n}^{(b)}$ is equal to the whole Grassmannian.
The juggling variety $\Gr_{k,n}(0)$ admits a decomposition into cells $C_\bfJ$ labeled 
by juggling patterns \cite{FLP22}. The projection of the cell $C_\bfJ$ to the $b$-th factor
is equal to the Schubert cell $\mathring X_{J_b}$. Hence it suffices to show that for any
$I\in\binom{[n]}{k}$ and any $b\in\bZ_n$ there exists a juggling pattern $\bfJ$ such that
$J_b=I$. To construct such a $\bfJ$ we find a subset $S\subset \bZ_n$ such that for the 
corresponding juggling pattern $\bfJ(S)$ one has $\bfJ(S)_b=I$ ($S$ consists of elements 
$s=b-i \mod n$, $i\in I$).   
\end{proof}

\begin{thm}\label{thm:Richardson}
The special fiber $\Pi_{\bfJ}(0)$ has the following properties:
\begin{itemize}
\item $\Pi_{\bfJ}(0)$ is reducible and equidimensional, $\dim \Pi_{\bfJ}(0)=\dim \Pi_{\bfJ}$;
\item the irreducible components of $\Pi_{\bfJ}(0)$ are the non-empty intersections of
$\Pi_{\bfJ}(0)$ with the irreducible components of $\Gr_{k,n}(0)$;  
\item each irreducible component of $\Pi_{\bfJ}(0)$ is isomorphic to a Richardson variety 
in the affine flag variety of the affine group ${GL}_n$. 
\end{itemize}
\end{thm}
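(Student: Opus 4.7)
The plan is to combine the flatness of $\underline{\Pi}_\bfJ\to\bA^1$ (Theorem 1) with the description of $\Gr_{k,n}(0)$ as the union of the affine Schubert components $Z_\sigma$, $\sigma\in T_{k,n}$, recalled in Section \ref{sec:gaG}, and then recognise each stratum of $\Pi_\bfJ(0)$ inside $\aFl$ as a Richardson stratum.

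\textbf{Dimension and equi-dimensionality.} First I would use flatness together with the second assertion of Theorem 1: since the closure of $\pi^{-1}(\bA^1\setminus\{0\})$ equals $\underline{\Pi}_\bfJ$, every irreducible component of the total space dominates $\bA^1$. The standard dimension formula for a flat morphism to a smooth curve then gives that every irreducible component of $\Pi_\bfJ(0)$ has dimension $\dim \Pi_\bfJ$, establishing the first bullet (reducibility being asserted only ``in general'').

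\textbf{Decomposition along the components of $\Gr_{k,n}(0)$, and identification with Richardson varieties.} Because each irreducible component of the closed subvariety $\Pi_\bfJ(0)\subset \Gr_{k,n}(0)$ is contained in a unique $Z_\sigma$, one has
\[
\Pi_\bfJ(0)=\bigcup_{\sigma\in T_{k,n}}\bigl(\Pi_\bfJ(0)\cap Z_\sigma\bigr),
\]
so the task reduces to identifying each non-empty intersection. For this I would show that the positroid condition $\bigcap_b\{U_b\in X^-_{J_b}\}$, when restricted to $\Gr_{k,n}(0)$, cuts out the intersection with the opposite affine Schubert variety $Z^-_{w(\bfJ)}\subset \aFl$. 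The argument is in two stages. At the level of $T$-fixed points this is combinatorial: the fixed points in $\Gr_{k,n}(0)$ are the $p_{w(\bfJ')}$ for juggling patterns $\bfJ'$; the point $p_{w(\bfJ')}$ lies in $\prod_b X^-_{J_b}$ iff $J'_b\ge J_b$ for every $b$, and the bijection between juggling patterns and their affine Weyl elements from Section \ref{sec:gaG} translates this precisely into $w(\bfJ')\ge w(\bfJ)$, which is the fixed-point characterisation of $Z^-_{w(\bfJ)}$. Since both $\Pi_\bfJ(0)$ and $Z^-_{w(\bfJ)}\cap \Gr_{k,n}(0)$ are closed unions of Iwahori orbits (by the Schubert cell decomposition of the juggling variety recalled at the end of Section \ref{sec:gaG}) with the same torus-fixed locus, they coincide. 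Consequently
\[
\Pi_\bfJ(0)\cap Z_\sigma \;=\; Z_\sigma\cap Z^-_{w(\bfJ)} \;=\; R^{\sigma}_{w(\bfJ)},
\]
which by the standard theory \cite{KL80,Rich92} is irreducible, non-empty precisely when $\sigma\ge w(\bfJ)$, and of dimension $\ell(\sigma)-\ell(w(\bfJ))$. Since every $\sigma\in T_{k,n}$ has common length $k(n-k)=\dim\Gr_{k,n}$, this dimension matches the value $\dim\Pi_\bfJ$ obtained from flatness in step one, so these Richardson varieties are exactly the irreducible components. Reducibility holds as soon as more than one $\sigma\in T_{k,n}$ dominates $w(\bfJ)$ in Bruhat order, which is the generic situation.

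\textbf{Main obstacle.} The most delicate point is the second stage of the identification: verifying that the Grassmannian-by-Grassmannian opposite Schubert conditions $U_b\in X^-_{J_b}$ really assemble, inside the affine flag variety, into the single opposite affine Schubert condition $Z^-_{w(\bfJ)}$. This requires careful bookkeeping of the base change between the lattice picture $(\bar L_b)$ and the collections $(U_b)$ (the shift $v_i\mapsto w_{i-b}$ used at the end of Section \ref{sec:gaG}), together with the precise match between the juggling pattern partial order $\bfJ'\supseteq\bfJ$ and the Bruhat order on the corresponding elements $w(\bfJ')\in W^{ext}$.
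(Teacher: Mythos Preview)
Your argument has a circularity problem. You invoke flatness of $\underline{\Pi}_\bfJ\to\bA^1$ (Theorem~1 of the introduction) to deduce equidimensionality of the special fibre. But in the paper's logical order that flatness statement is Theorem~\ref{thm:flat}, proved \emph{after} the present theorem, and its proof explicitly uses the equality $\dim\Pi_\bfJ(0)=\dim\Pi_\bfJ$ established here. So you cannot appeal to flatness at this point; the dimension has to be obtained by other means. The paper does this directly: once each non-empty $\Pi_\bfJ(0)\cap Z_{z^{\sigma\omega_k}}$ is identified with the affine Richardson variety $R_{w(\bfJ)}^{z^{\sigma\omega_k}}$, its dimension is $\ell(z^{\sigma\omega_k})-\ell(w(\bfJ))=k(n-k)-\ell(w(\bfJ))$, and this equals $\dim\Pi_\bfJ$ by the known fact that the codimension of a positroid variety in $\Gr_{k,n}$ equals the length of the associated bounded affine permutation (\cite{KLS13,Lam14}). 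No flatness is needed, and indeed this computation is what later feeds into the flatness proof.

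A second, smaller issue is your justification of $\Pi_\bfJ(0)=Z^-_{w(\bfJ)}\cap\Gr_{k,n}(0)$. You write that both sides are ``closed unions of Iwahori orbits'' and then compare $T$-fixed points. But $\Pi_\bfJ(0)$ is cut out by \emph{opposite} Schubert conditions $U_b\in X^-_{J_b}$, so it is stable under the \emph{opposite} Iwahori group $\bI^-$, not under $\bI$; the Schubert-cell decomposition of the juggling variety from Section~\ref{sec:gaG} is an $\bI$-orbit decomposition and does not help here. The paper's approach is more direct: it observes (via the base change between the lattice and quiver pictures) that the collection of conditions $U_b\in X^-_{J_b}$ is precisely the condition of lying in the closure of the $\bI^-$-orbit through $p_\bfJ$, i.e.\ in $Z^-_{w(\bfJ)}$. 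Your $T$-fixed-point comparison is correct and useful, but the set-theoretic conclusion requires $\bI^-$-stability of both sides, not $\bI$-stability.
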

\begin{proof}
We prove all the statements of the theorem together. The degenerate positroid variety  
$\Pi_{\bfJ}(0)$ is embedded into
the affine flag variety and the image of the embedding is equal to the union of Schubert varieties
$Y_{z^{\sigma\om_k}}$
for $\sigma\in S_n$ (see section \ref{sec:gaG}). 
The juggling patterns label torus fixed points in the image (they correspond to a subset of 
elements of the extended affine Weyl group). Let us denote the point corresponding to $\bfJ$
by $p_\bfJ$ and the corresponding element from the extended affine Weyl group $W^{ext}$ 
by $w_\bfJ$. The condition  $U_b\in X^-_{J_b}$ means that 
the point $(U_b)_b$ sits in the orbit of the opposite Iwahori group passing through $p_\bfJ$. 
We conclude that $\Pi_{\bfJ}(0)$ is the union of the affine Richardson varieties corresponding
to $z^{\sigma\om_k}$ and $w_\bfJ$, which is non-empty if and only if 
$w_\bfJ$ is smaller than or equal to $z^{\sigma\om_k}$.
This proves the third and second statements of the theorem (the length of $z^{\sigma\om_k}$
does not depend on $\sigma$).

Finally, let us prove that $\dim \Pi_{\bfJ}(0)=\dim \Pi_{\bfJ}$. By the argument above, 
the dimension
of the degenerate positroid variety $\Pi_{\bfJ}(0)$ is given by 
\[
\dim \Pi_{\bfJ}(0) = \dim \Gr_{k,n-k} - \dim Y_{w_\bfJ} = k(n-k)-\ell(w_\bfJ).
\] 
However, the right hand side is also equal to the dimension of the classical positroid variety 
$\dim \Pi_{\bfJ}$. In fact, besides the set of juggling patterns, positroids can be also labeled
by affine permutations forming the extended affine Weyl group $W^{ext}$ \cite{KLS13,Lam14}. 
Then the codimension of $\Pi_\bfJ$ in the Grassmann 
variety is exactly the length of the corresponding affine permutation. 
\end{proof}

\begin{cor}
The number of irreducible components of $\Pi_{\bfJ}(0)$ is equal to the number 
of cardinality $k$ subsets of $\bZ_n$ such that for any $b\in \bZ_n$ one has
$J_b\le J(S)_b$.
\end{cor}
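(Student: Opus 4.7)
The plan is to reduce the counting of irreducible components of $\Pi_\bfJ(0)$ to Theorem~\ref{thm:Richardson} via two identifications. First, I would establish a bijection between cardinality $k$ subsets $S\subset \bZ_n$ and the index set $T_{k,n}$ through the assignment $S\mapsto w(\bfJ(S))$. Indeed, for each such $S$ the variety $\Pi_{\bfJ(S)}(0)$ is a single point (as in the example in Section~\ref{sec:global} following Example~\ref{ex:zero-dim}); Theorem~\ref{thm:Richardson} realizes this point as a Richardson variety $R_{w(\bfJ(S))}^{\sigma}$ with $\sigma\in T_{k,n}$ and $\sigma\ge w(\bfJ(S))$, and zero-dimensionality forces $\sigma=w(\bfJ(S))$. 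Since both $\{S\subset \bZ_n:|S|=k\}$ and $T_{k,n}$ have cardinality $\binom{n}{k}$, the map is a bijection.

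Under this bijection, Theorem~\ref{thm:Richardson} identifies the number of irreducible components of $\Pi_\bfJ(0)$ with $|\{S: w(\bfJ)\le w(\bfJ(S))\text{ in }W^{ext}\}|$. The proof thus reduces to the combinatorial equivalence
\[
w(\bfJ)\le w(\bfJ')\text{ in }W^{ext}\iff J_b\le J'_b\text{ for every } b\in \bZ_n,
\]
for any juggling patterns $\bfJ,\bfJ'$. For the forward direction, the Bruhat inequality is equivalent to $p_\bfJ\in Y_{w(\bfJ')}\subset \aFl$; the projection $\mathrm{pr}_b:\Gr_{k,n}(0)\to \Gr_{k,n}$ is $\bI$-equivariant with $\bI$ acting on $\Gr_{k,n}$ through $B$ under the $t=0$ evaluation, so $\mathrm{pr}_b(Y_{w(\bfJ')})$ is the closure of the $B$-orbit through the image $p_{J'_b}$ of the top T-fixed point, namely $X_{J'_b}$. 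Hence $p_{J_b}\in X_{J'_b}$, giving $J_b\le J'_b$ for all $b$.

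The reverse direction is the main obstacle. I would resolve it by invoking the classical identification (see \cite{KLS13,Lam14}) between the Bruhat order on bounded affine permutations associated to juggling patterns and the coordinate-wise partial order on the patterns themselves, equivalent via Theorem~\ref{thm:Richardson} to the reverse-inclusion order on classical positroid varieties. As a safeguard, one can combine this with the fact that Theorem~\ref{thm:Richardson} gives $\ell(w(\bfJ)) = \mathrm{codim}\,\Pi_\bfJ$, so that both partial orders share the same rank function, which together with the already-proved forward implication makes the inductive step along saturated chains routine.
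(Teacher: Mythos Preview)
Your argument is correct in outline, but it takes a considerably longer route than the paper and leans on outside references for the reverse implication, which is unnecessary here. The paper's proof is a one-liner: it simply reads off from the proof of Theorem~\ref{thm:Richardson} that the irreducible components are indexed by those $\sigma\in T_{k,n}$ (equivalently, cardinality~$k$ subsets $S$) with $w_\bfJ\le z^{\sigma\omega_k}$, and asserts that this Bruhat condition is exactly $\bfJ\le\bfJ(S)$ componentwise.

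The point you are working hard to establish --- the equivalence $w(\bfJ)\le w(\bfJ(S))\iff J_b\le J(S)_b$ for all $b$ --- has a direct proof that bypasses both your projection argument and the appeal to \cite{KLS13,Lam14}. Namely, the Richardson variety $R_{w(\bfJ)}^{w(\bfJ(S))}$ is nonempty if and only if the torus fixed point $p_{\bfJ(S)}$ (the dense-orbit point of the component $Y_{w(\bfJ(S))}$) lies in the opposite Schubert variety $Y^-_{w(\bfJ)}$. But by the proof of Theorem~\ref{thm:Richardson}, the intersection $Y^-_{w(\bfJ)}\cap\Gr_{k,n}(0)$ is exactly $\Pi_\bfJ(0)$, and membership of the coordinate point $p_{\bfJ(S)}=(p_{J(S)_b})_b$ in $\Pi_\bfJ(0)$ is, by the very definition of $\Pi_\bfJ(0)$, the condition $p_{J(S)_b}\in X^-_{J_b}$ for every $b$, i.e.\ $J(S)_b\ge J_b$. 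This handles both directions at once and stays entirely internal to the paper.

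A small technical caution on your forward direction: the claim that $\mathrm{pr}_b$ intertwines the $\bI$-action with the standard $B$-action on $\Gr_{k,n}$ is delicate, since the identification of the $b$-th factor with $\Gr_{k,n}$ involves a vertex-dependent base change (Remark~\ref{rem:Grneck} and the end of Section~\ref{sec:gaG}). The statement that $\mathrm{pr}_b(C_{\bfJ'})=\mathring X_{J'_b}$ is true (and used in the proof of Corollary~\ref{cor:proj}), so your conclusion survives, but the justification via ``$t=0$ evaluation to $B$'' is only literally correct at $b=0$.
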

\begin{proof}
The condition $J_b\le J(S)_b$ is explicitly written as 
$J_b\le \{s-b,\ s\in S\}$, where $s-b$ is understood as $n-b+s$ provided $s-b\le 0$.
According to the proof of the theorem above, we know that
the number of irreducible components of $\Pi_{\bfJ}(0)$ is equal to the number of 
sets $S\subset \binom{[n]}{k}$ such that $\bfJ\le \bfJ(S)$, i.e. $J_b\le \bfJ(S)_b$ for
all $b\in\bZ_n$. By definition, $\bfJ(S)_b = \{s-b,\ s\in S\}$.
\end{proof}

\begin{thm}\label{thm:flat}
The global positroid varieties form a  flat family.
\end{thm}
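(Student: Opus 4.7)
The plan is to prove that the reduced scheme $\underline{\Pi}_\bfJ$ equals the closure $Z := \overline{\pi^{-1}(\bA^1\setminus\{0\})}$ of its generic part, making $\underline{\Pi}_\bfJ$ irreducible; flatness is then automatic because a reduced scheme dominating a smooth one-dimensional base is flat over it (every stalk is a torsion-free module over the DVR $\bC[\veps]_{(\veps)}$).

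For $\veps\ne 0$ the fiber $\Pi_\bfJ(\veps)\cong \Pi_\bfJ$ is irreducible of dimension $\dim\Pi_\bfJ$, so $Z$ is irreducible of dimension $\dim\Pi_\bfJ+1$ and Krull's principal ideal theorem gives $\dim(Z\cap \Pi_\bfJ(0))=\dim\Pi_\bfJ$. By Theorem \ref{thm:Richardson}, $\Pi_\bfJ(0)$ is equidimensional of dimension $\dim\Pi_\bfJ$ with irreducible components the affine Richardson varieties $R_{w(\bfJ)}^\sigma$ for $\sigma\in T_{k,n}$, $\sigma\ge w(\bfJ)$; hence $Z\cap\Pi_\bfJ(0)$ is a union of some of these components, and the proof reduces to showing that \emph{every} Richardson component lies in $Z$.

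For this step I would use the affine-lattice description of Section \ref{sec:gaG} together with an explicit one-parameter deformation. Given a general point $x\in \mathring R_{w(\bfJ)}^\sigma \subset Y_{z^{\sigma\om_k}}$, the flatness of $\underline{\Gr}_{k,n}$ and its irreducibility guarantee the existence of an arc $\gamma\colon \bA^1\to \underline{\Gr}_{k,n}$ with $\gamma(0)=x$ and $\gamma(\veps)\in \Gr_{k,n}$ for $\veps\ne 0$. The remaining task is to refine $\gamma$ so that it lives inside $\bA^1\times \prod_b X^-_{J_b}$; using that the opposite-Schubert cuts are $\veps$-independent closed conditions that already hold at $\gamma(0)=x$, a suitable lift can be constructed by starting from an explicit lattice representative of $x$ and deforming the incidence conditions $\theta_b L_b\subset L_{b+1}$ one vertex at a time, exploiting the Iwahori symmetry over $\bA^1$. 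This produces an arc in $\underline{\Pi}_\bfJ$ through $x$, placing $x$ in $Z$ and hence $R_{w(\bfJ)}^\sigma\subset Z$ for every $\sigma$.

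The main technical obstacle is precisely this last step — producing the refined arc through every $x\in \mathring R_{w(\bfJ)}^\sigma$ — which requires a careful use of the lattice model and the Iwahori orbits. A purely algebraic alternative would bypass it by showing directly that $\veps$ is a non-zerodivisor in the multi-graded Pl\"ucker algebra $\underline{\emph{Pl}}_\bfJ$ from Proposition \ref{prop:relations}, i.e.\ that the relations of Definition \ref{dfn:ideal} yield a $\bC[\veps]$-free module; this is the strategy carried out for $k=1$ in the Appendix via a flat monomial basis, and for general $k$ would require a colored analogue of semi-standard tableaux.
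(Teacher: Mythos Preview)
Your overall architecture matches the paper's: reduce flatness to $\underline\Pi_\bfJ = Z := \overline{\pi^{-1}(\bA^1\setminus\{0\})}$, use equidimensionality of the special fiber (Theorem~\ref{thm:Richardson}) to see that $Z\cap\Pi_\bfJ(0)$ is a union of full irreducible components, and then check that every component actually appears.

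The gap you flag in the last step is real in your write-up but entirely avoidable. The paper does \emph{not} try to lift an arc through a general point of $\mathring R_{w(\bfJ)}^\sigma$, nor does it invoke Iwahori orbits or the lattice model for this. Instead it uses a single, explicit \emph{constant} section of the family: for each component, labeled by a set $S\in\binom{[n]}{k}$ with $\bfJ\le\bfJ(S)$, take the torus fixed point
\[
p(S)_b = \mathrm{span}\{w_i^{(b)}:\ i\in J(S)_b\}.
\]
This point lies in a single irreducible component of the juggling variety (hence of $\Pi_\bfJ(0)$), and the key elementary observation is that $p(S)\in\Pi_\bfJ(\veps)$ for \emph{every} $\veps$, not only $\veps=0$. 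Indeed, as a quiver subrepresentation $p(S)$ decomposes as the direct sum of the spans $\mathrm{span}\{w_n^{(s)},w_{n-1}^{(s+1)},\dots,w_1^{(s+n-1)}\}$ over $s\in S$, each of which is stable under $M(\veps)$ for all $\veps$; the Schubert conditions $p(S)_b\in X^-_{J_b}$ are $\veps$-independent and already hold since $p(S)\in\Pi_\bfJ(0)$. Thus the required ``arc'' is simply $\veps\mapsto(\veps,p(S))$, and no lifting or refinement is needed at all.

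Your alternative route (proving $\veps$ is a non-zerodivisor via a flat monomial basis) is precisely what the Appendix carries out for $k=1$, but the paper's proof of Theorem~\ref{thm:flat} for general $k$ requires nothing of the sort.
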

\begin{proof}
Our family $\pi:\underline\Pi_{\bfJ}\to \bA^1$ is defined over the affine line, 
all the fibers outside zero are isomorphic to $\Pi_{\bfJ}$ and the special fiber 
is reducible  with each
irreducible component of the same dimension $\dim\Pi_{\bfJ}$. Hence  
it suffices to check that the closure of the preimage 
$\pi^{-1}(\bA^1\setminus 0)$ coincides with the whole family \cite{V25}.
Let us consider the closure $\overline{\pi^{-1}(\bA^1\setminus 0)}$ as a family over $\bA^1$ (with the obvious projection map). Then each irreducible component of the special fiber has dimension $\dim\Pi_{\bfJ}$. In fact, 
the dimension can not be larger (since $\dim \Pi_{\bfJ}(0)=\dim \Pi_{\bfJ}$)
and by \cite{Ha77,V25} it can not be smaller. Hence its suffices to show that for any
irreducible
component of the special fiber $\Pi_{\bfJ}(0)$ there exists a point satisfying the following properties:
\begin{itemize}
	\item the point belongs to the above closure,
	\item the point belong to a single irreducible component. 
\end{itemize} 
The irreducible components of $\Pi_{\bfJ}(0)$ are labeled by the cardinality $k$ sets
$S\subset \bZ_n$ such that $\bfJ\le \bfJ(S)$. Let $S$ be such a set and let 
$p(S)\in \Pi_{\bfJ}(0)$ be the corresponding torus fixed point:
\begin{equation}\label{eq:p(S)}
p(S)_b = \mathrm{span}\{w_i^{(b)}:\ i\in J(S)_b\}.
\end{equation}
The point $p(S)$ belongs to a single irreducible component of the juggling variety 
and hence to a single component of $\Pi_{\bfJ}(0)$. Now it suffices to note that 
$p(S)$ (defined by  \eqref{eq:p(S)}) belongs to the fiber $\Pi_{\bfJ}(\veps)$
also for all non-zero values of $\veps$. In fact, $p(S)$ (as a representation of the 
equioriented quiver) is a direct sum of $n$-dimensional subrepresentations 
$\mathrm{span}\{w^{(s)}_n,w^{(s+1)}_{n-1},\dots,w^{(s+n-1)}_1\}$. Each such a span is
a subrepresentation with respect to the maps $M(\veps)$ and $p(S)_b\in X^-_{J_b}$
for all $b$ (since $p(S)\in \Pi_{\bfJ}(0)$). We conclude that $p(S)\in \Pi_{\bfJ}(\veps)$
for any $\veps$.
\end{proof}

For a juggling pattern $\bfJ$ let $\emph{Pl}_\bfJ=\bigoplus_{m\in\bZ_{\ge 0}} \emph{Pl}_\bfJ(m)$ 
be the homogeneous
coordinate ring of the classical positroid variety $\Pi_\bfJ$ with respect to the embedding 
$\imath: \Pi_\bfJ\subset \bP(\Lambda^k W)$. Then one has \cite{KLS13,KLS14,Lam19,AGH23} 
\[
\dim \emph{Pl}_\bfJ(m) = \dim H^0(\Pi_\bfJ,\imath^*\eO(m)), \ 
H^{>0}(\Pi_\bfJ,\imath^*\eO(m))=0
\]
(with $\eO(m)=\eO^{\T m}$).

Similarly, let
 $\widetilde{\emph{Pl}}(\bfJ)=\bigoplus_{\bfm\in\bZ^{\bZ_n}} \widetilde{\emph{Pl}}_\bfJ(\bfm)$
be the homogeneous coordinate ring of the degenerate positroid variety $\Pi_\bfJ(0)$
with respect to the defining embedding into the product of projective spaces 
$\jmath: \Pi_\bfJ(0)\subset \prod_{b\in\bZ_n} \bP(\Lambda^k W^{(b)})$. 
We denote by $\eO_b$ the standard Picard generator of the projective 
space $\bP(\Lambda^k W^{(b)})$.
We put forward the following conjecture. 

\begin{conj}\label{conj:higercoh}
For any $\bfm\in\bZ_{\ge 0}^{\bZ_n}$ let $\eO(\bfm)=\bigotimes_{b\in\bZ_n} \eO_b(m_b)$.
Then the higher cohomologies $H^{>0}(\Pi_\bfJ,\jmath^*\eO(\bfm))$ vanish.
\end{conj}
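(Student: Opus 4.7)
The plan is to combine the description of $\Pi_\bfJ(0)$ as a union of affine Richardson varieties from Theorem~\ref{thm:Richardson} with Frobenius splitting theory for Kac--Moody flag varieties. First I would use Theorem~\ref{thm:Richardson} to identify $\Pi_\bfJ(0)$, as a subvariety of the affine flag variety $\aFl$ for $GL_n$, with the (reduced) union $\bigcup_{\sigma\ge w(\bfJ)} R_{w(\bfJ)}^{z^{\sigma\om_k}}$ of affine Richardson varieties. Modulo Conjecture~\ref{conj:reduced} this coincides with the scheme $\Pi_\bfJ(0)$; without it one would first work with the reduced scheme and argue separately that a possibly non-reduced structure does not obstruct cohomological vanishing (for instance, using that the defining ideal from Definition~\ref{dfn:ideal} is compatible with the splitting constructed below).

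Next I would track the pullback of $\eO(\bfm)$ through the embedding $\Gr_{k,n}(0)\hookrightarrow\aFl$ of Section~\ref{sec:gaG}. Each Pl\"ucker line bundle $\eO_b$ on $\bP(\Lambda^k W^{(b)})$ should correspond, after the base change of Remark~\ref{rem:Grneck}, to the line bundle $L_{\omega_k^{(b)}}$ on $\aFl$ attached to a fundamental weight of $\widehat{\mgl}_n$ shifted to vertex $b$. Consequently $\jmath^*\eO(\bfm)$ is expected to correspond to the line bundle on $\aFl$ attached to the dominant weight $\sum_b m_b\,\omega_k^{(b)}$, restricted to $\Pi_\bfJ(0)$.

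Then I would invoke the Frobenius splitting machinery: by the work of Kumar--Littelmann, Mathieu and Kashiwara, $\aFl$ carries a compatible Frobenius splitting with respect to every Schubert and opposite Schubert variety, and hence with respect to every affine Richardson variety $R_u^v$. Since a finite union of compatibly split subschemes is compatibly split (Brion--Kumar), the subvariety $\Pi_\bfJ(0)\subset\aFl$ inherits a compatible splitting. The standard vanishing theorem for compatibly split subvarieties then yields $H^{>0}(\Pi_\bfJ(0), L_\lambda)=0$ for every dominant $\lambda$, after a routine spreading-out from characteristic $p$ to characteristic zero.

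The principal obstacle is the second step: the precise bookkeeping of the line bundles across the base change from $v$-coordinates to $w$-coordinates and the identification of $\jmath^*\eO(\bfm)$ with a single dominant affine line bundle on $\aFl$. An alternative route, closer in spirit to the appendix, would construct an explicit flat monomial basis of the multigraded Pl\"ucker ring $\underline{\emph{Pl}}_\bfJ$ whose rank on each fiber matches $\dim H^0(\Pi_\bfJ,\eO(\bfm))$ from the classical KLS--Lam theory; together with Theorem~\ref{thm:flat}, cohomology and base change, and the generic vanishing on $\Pi_\bfJ$, this would force $H^{>0}(\Pi_\bfJ(0),\jmath^*\eO(\bfm))=0$. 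For $k>1$, however, this demands the colored semi-standard tableau framework that the main text leaves open, which is why I expect the Frobenius splitting route to be the more tractable one.
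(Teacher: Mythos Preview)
The statement you are attempting to prove is stated in the paper as a \emph{conjecture} and is not proved there; the paper only records it and notes that, combined with Theorem~\ref{thm:flat}, it would yield the dimension identity $\dim\widetilde{\emph{Pl}}_\bfJ(\bfm)=\dim\emph{Pl}_\bfJ(\sum_b m_b)$. Even the appendix, which establishes Conjecture~\ref{conj:reduced} for $k=1$ by constructing an explicit flat monomial basis, does not claim Conjecture~\ref{conj:higercoh}: that argument computes the dimension of the coordinate ring directly and never touches higher cohomology. So there is no paper proof for you to be compared against.

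As a proposed attack on the open problem, your Frobenius-splitting outline is plausible, and you have correctly identified the crux: matching $\jmath^*\eO(\bfm)$ with the restriction of a genuine dominant line bundle on $\aFl$. The embedding of Section~\ref{sec:gaG} passes through the vertex-dependent base change of Remark~\ref{rem:Grneck} and the twists $\theta_b$, so it is not automatic that the $b$-th Pl\"ucker bundle pulls back to the fundamental bundle at the $b$-th node of the affine diagram; this needs to be written out. You should also check that the vanishing theorems you invoke (Kumar--Littelmann, Brion--Kumar) apply in the form you need: they are usually stated for ample or globally generated bundles on a fixed finite-dimensional Schubert variety, so you must verify that your bundle has the required positivity on the union $\bigcup_\sigma Z_{z^{\sigma\om_k}}$ rather than merely being ``dominant'' as an affine weight. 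Finally, your reliance on Conjecture~\ref{conj:reduced} is real: without reducedness, compatibility of the splitting with the scheme $\Pi_\bfJ(0)$ (as opposed to its reduction) is not given for free. Your alternative route via a colored tableau basis is exactly the missing structure the paper highlights after Proposition~\ref{prop:relations}, and for $k>1$ it is open.
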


Assuming Conjecture \ref{conj:higercoh} holds true, Theorem \ref{thm:flat} implies that
$\dim \widetilde{\emph{Pl}}_\bfJ(\bfm) = \dim \emph{Pl}_\bfJ(\sum_b m_b)$.

\appendix
\section{Projective spaces}
In this section we consider the $k=1$ case, i.e. global positroid varieties inside the global projective spaces $\underline{\bP}^{n-1}=\underline{\Gr}_{1,n}$. 
Let $\bfJ=(J_0,\dots,J_{n-1})$ be a $(1,n)$ juggling pattern; in particular, each $J_b$ is an element from the set $[n]$. 
Let $L(\bfJ)\subset\bZ_n$ be the set of indices $b$ such that $J_b=1$ ($b\in L(\bfJ)$ iff $J_b=1$).
Let $\ell(\bfJ)$ be the cardinality of $L(\bfJ)$.  

\begin{lem}\label{lem:k=1proj}
For any $\veps\ne 0$ the variety $\Pi_\bfJ(\veps)$ is isomorphic to the projective space 
$\bP^{\ell-1}$. For a point $(U_b)_b\in \Pi_\bfJ(\veps)$ a Pl\"ucker coordinate 
$\Delta^{(b)}_i$ vanishes on $U_b$ unless $b+i-1\in L(\bfJ)$.
\end{lem}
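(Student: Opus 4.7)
The plan is to reduce the lemma to an explicit analysis of the coordinates of a generator of $U_0$, exploiting that for $\veps\ne 0$ every map $M_{b\to b+1}(\veps)$ is an isomorphism. First I would observe that the conditions $M_{b\to b+1}(\veps)U_b\subset U_{b+1}$, together with $\dim U_b=1$, force $U_{b+1}=M_{b\to b+1}(\veps)U_b$, so the entire tuple $(U_b)_b$ is determined by $U_0$. Writing $U_0=\bC u$ with $u=\sum_{i=1}^n \alpha_i w_i^{(0)}$, a short direct computation shows that the coefficient of $w_j^{(b)}$ in $M_{b-1\to b}(\veps)\cdots M_{0\to 1}(\veps)(u)$ equals, up to a nonzero power of $\veps$, the single scalar $\alpha_m$ with $m\in\{1,\dots,n\}$ the residue of $b+j$ modulo $n$.

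Next I would translate the Schubert constraints. The condition $U_b\in X^-_{J_b}$ forces the coefficient of $w_j^{(b)}$ in a generator of $U_b$ to vanish for every $j<J_b$. By the previous paragraph this is equivalent to $\alpha_m=0$ whenever there exists $b\in\bZ_n$ with $m-b\in\{1,\dots,J_b-1\}$ (read in $\bZ_n$ and identified with $\{1,\dots,n\}$). So the problem reduces to determining, for each $m$, whether any $b$ enforces such a vanishing.

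The key structural step uses the juggling pattern rule, which for $k=1$ reads $J_{b+1}=J_b-1$ whenever $J_b>1$. Consequently, between two consecutive elements of $L(\bfJ)$ the value $J_b$ equals the forward cyclic distance from $b$ to the next element of $L(\bfJ)$, while $J_b=1$ for $b\in L(\bfJ)$. With this description in hand I would show that $\alpha_m$ is free if and only if $m-1\in L(\bfJ)$. For necessity, the choice $b=m-1$ produces the constraint $J_{m-1}\ge 2$, which is exactly $m-1\notin L(\bfJ)$. For sufficiency, suppose $m-1\in L(\bfJ)$ and pick any $b\in\bZ_n$ with $d:=m-b\in\{1,\dots,n\}$ in the cyclic sense. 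If $d=1$ then $J_b=J_{m-1}=1\le d$. If $d\ge 2$, then moving $d-1$ steps forward from $b$ lands on $m-1\in L(\bfJ)$, so the next element of $L(\bfJ)$ weakly ahead of $b$ lies at distance at most $d-1$; hence $J_b\le d-1<d$ when $b\notin L(\bfJ)$ and $J_b=1\le d$ when $b\in L(\bfJ)$. In every case the constraint $d<J_b$ fails.

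Projectivizing the resulting $\ell$-dimensional space of admissible $(\alpha_m)$ then yields $\Pi_\bfJ(\veps)\cong \bP^{\ell-1}$, and the Pl\"ucker statement is immediate from the first paragraph: $\Delta^{(b)}_i(U_b)$ is a nonzero scalar multiple of $\alpha_m$ with $m\equiv b+i\pmod{n}$, and this $\alpha_m$ is identically zero on $\Pi_\bfJ(\veps)$ precisely when $m-1\notin L(\bfJ)$, i.e.\ when $b+i-1\notin L(\bfJ)$. The main obstacle is the juggling-pattern bookkeeping in the third paragraph; the argument is elementary but needs careful handling of the cyclic indexing.
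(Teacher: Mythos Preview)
Your approach is correct and is essentially the paper's argument unpacked: the paper proves the vanishing claim in one line via the identity $\Delta^{(b)}_i(U_b)=\Delta^{(b+i-1)}_1(U_{b+i-1})$ together with the Schubert condition at vertex $b+i-1$, which is exactly your necessity step with the choice $b\mapsto m-1$. Your treatment is more complete in that you also carry out the converse (sufficiency) direction explicitly, which is what actually pins down the isomorphism with $\bP^{\ell-1}$; the paper leaves this implicit.

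One slip to fix: your description of $J_b$ is off by one. From $J_{b+1}=J_b-1$ whenever $J_b>1$ one gets $J_{b+e}=J_b-e$ for $0\le e\le J_b-1$, so for $b\notin L(\bfJ)$ the next element of $L(\bfJ)$ forward from $b$ is $b+J_b-1$, at distance $J_b-1$ (not $J_b$). Carrying this through your sufficiency argument, the bound becomes $J_b\le d$ rather than $J_b\le d-1$. This is harmless, since $J_b\le d$ is precisely the inequality you need to conclude that the constraint $d<J_b$ fails; only the stated intermediate description and inequality need adjusting.
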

\begin{proof}
Let $U_b$ be a component of a point $(U_b)_b\in \Pi_\bfJ(\veps)$. Then 
$\Delta^{(b+i-1)}_1(U_{b+i-1})=\Delta^{(b)}_i(U_b)$. If $J_{b+i-1}>1$, then the condition 
$U_{b+i-1}\in X^-_{J_{b+i-1}}$
implies that $\Delta^{(b+i-1)}_1(U_{b+i-1})=0$, which proves the desired claim.
\end{proof}

\begin{lem}\label{lem:k=1projdeg}
The special fiber 
$\Pi_\bfJ(0)$ has $\ell(\bfJ)$ irreducible components, each of dimension $\ell(\bfJ)-1$.
For a point $(U_b)_b\in \Pi_\bfJ(0)$ the Pl\"ucker coordinate $\Delta^{(b)}_i$
vanishes on $U_b$ unless $b+i-1\in L(\bfJ)$.	
\end{lem}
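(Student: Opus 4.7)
The plan is to extract both assertions from results already established: the Pl\"ucker vanishing from a direct juggling-plus-Schubert argument, and the component count and dimension from Theorem \ref{thm:Richardson} combined with Lemma \ref{lem:k=1proj}.

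For the vanishing claim, I would fix $(U_b)_b \in \Pi_\bfJ(0)$ together with $b, i$ such that $b + i - 1 \notin L(\bfJ)$, so that $J_{b+i-1} > 1$. Set $c = i - 1$. The case $c = 0$ is immediate from $U_b \in X^-_{J_b}$ combined with $J_b > 1$. For $c \geq 1$, I would examine the composition $M_{b \to b+c}(0) = M_{b+c-1 \to b+c}(0) \circ \cdots \circ M_{b \to b+1}(0)$, which sends $w_j \mapsto w_{j-c}$ when $j > c$ and $w_j \mapsto 0$ otherwise. Writing a generator of $U_b$ as $\sum_j x^{(b)}_j w_j$, the $w_1$-coefficient of its image is $x^{(b)}_i = \Delta^{(b)}_i(U_b)$. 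If this were nonzero, the image would be a nonzero line, and the juggling condition $(U_b)_b \in \Gr_{1,n}(0)$ would force it to coincide with $U_{b+c}$; then $\Delta^{(b+c)}_1(U_{b+c}) \neq 0$, contradicting $U_{b+c} \in X^-_{J_{b+c}}$ with $J_{b+c} > 1$.

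To count components I would invoke the corollary following Theorem \ref{thm:Richardson}: they are in bijection with singletons $S = \{s\} \subset \bZ_n$ satisfying $\bfJ \leq \bfJ(\{s\})$. A direct computation yields $J(\{s\})_b = ((s - 1 - b) \bmod n) + 1$; in particular $J(\{s\})_{s - 1} = 1$. Hence $\bfJ \leq \bfJ(\{s\})$ at $b = s - 1$ forces $J_{s-1} \leq 1$, i.e.\ $s - 1 \in L(\bfJ)$. Conversely, suppose $s - 1 \in L(\bfJ)$ and fix any $b$: the nearest reset of $\bfJ$ cyclically forward from $b$ sits at cyclic distance at most $(s - 1 - b) \bmod n$, so $J_b \leq ((s - 1 - b) \bmod n) + 1 = J(\{s\})_b$. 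This establishes a bijection $\{s : \bfJ \leq \bfJ(\{s\})\} \leftrightarrow L(\bfJ)$ via $s \leftrightarrow s-1$, so $\Pi_\bfJ(0)$ has exactly $\ell(\bfJ)$ irreducible components. For the dimension, Theorem \ref{thm:Richardson} asserts equidimensionality with $\dim \Pi_\bfJ(0) = \dim \Pi_\bfJ$, and Lemma \ref{lem:k=1proj} identifies $\Pi_\bfJ$ with $\bP^{\ell(\bfJ) - 1}$, so every component has dimension $\ell(\bfJ) - 1$. I do not foresee a major obstacle; the only care needed is in the cyclic index arithmetic of the bijection $s \leftrightarrow s - 1$ between admissible singletons and resets of $\bfJ$.
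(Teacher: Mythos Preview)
Your proposal is correct and follows essentially the same route as the paper. The paper dispatches the vanishing claim with ``the second part is proved in the same way as above'' (referring to Lemma~\ref{lem:k=1proj}), and for the component count it likewise invokes the classification from Theorem~\ref{thm:Richardson} (phrased as: the condition $\bfJ(S)\ge\bfJ$ is equivalent to ``$\bfJ(S)_b=1\Rightarrow J_b=1$''), together with the observation that for each $b$ there is a unique singleton $S$ with $\bfJ(S)_b=1$; your explicit bijection $s\leftrightarrow s-1$ and the cyclic-distance inequality make this same equivalence concrete, and your care with the map $M_{b\to b+c}(0)$ in the $\veps=0$ case spells out what the paper leaves implicit.
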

\begin{proof}
The second part is proved in the same way as above. To prove the first part	
we recall that the irreducible components of $\Pi_\bfJ(0)$ 
are labeled by the cardinality $k$ subsets  $S\subset \bZ_n$ such that $\bfJ(S)\ge\bfJ$.
This condition is equivalent to the condition that  $\bfJ(S)_b=1$ implies $J_b=1$ 
(recall that $k=1$ in this section).
Since there exists a unique 
$S$ such that  $\bfJ(S)_b=1$ for a fixed $b$, we arrive at the desired claim.	
\end{proof}

For a non-negative integer $M$ and a projective space $\bP$ let $\eO(M)$ be the $M$-th tensor
 power of the ample generator $\eO_\bP(1)$ of the Picard group of a projective space.
\begin{cor}
For $M\ge 0$ one has $\dim H^0(\Pi_\bfJ,\eO(M)) = \binom{M+\ell(\bfJ)-1}{M}$. 	
\end{cor}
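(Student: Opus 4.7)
The plan is to reduce the statement to the classical formula for global sections of line bundles on projective space, which applies directly once $\Pi_\bfJ$ is identified with a projective space.

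First, I would use the general lemma (from Section \ref{sec:global}) that $\Pi_\bfJ(\veps)\simeq \Pi_\bfJ$ for any $\veps\neq 0$ together with Lemma \ref{lem:k=1proj} to conclude that $\Pi_\bfJ\simeq \bP^{\ell(\bfJ)-1}$. More concretely, fixing $\veps=1$, Lemma \ref{lem:k=1proj} shows that every Pl\"ucker coordinate $\Delta^{(b)}_i$ vanishes on $\Pi_\bfJ(1)$ unless $b+i-1\in L(\bfJ)$. Since the maps $M_{b\to b+1}(1)$ are isomorphisms, the component $U_0$ determines all other components, and the non-vanishing coordinates $\Delta^{(0)}_i$ with $i-1\in L(\bfJ)$ can take arbitrary values subject only to not all being zero. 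Hence $\Pi_\bfJ$ embeds as a linear subspace of $\bP(W)=\bP^{n-1}$ of projective dimension $\ell(\bfJ)-1$, and so is isomorphic to $\bP^{\ell(\bfJ)-1}$.

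Next, I would observe that under this identification the ample generator $\eO_{\Pi_\bfJ}(1)$ pulls back from the ample generator of the ambient $\bP^{n-1}$, so it coincides with the standard $\eO(1)$ on $\bP^{\ell(\bfJ)-1}$. Hence
\[
H^0\bigl(\Pi_\bfJ,\eO(M)\bigr)\simeq H^0\bigl(\bP^{\ell(\bfJ)-1},\eO(M)\bigr),
\]
which is the space of homogeneous polynomials of degree $M$ in $\ell(\bfJ)$ variables. Its dimension is $\binom{M+\ell(\bfJ)-1}{M}$, giving the claim.

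There is no real obstacle: the only subtle point is making sure that the restriction of $\eO_{\bP^{n-1}}(1)$ to the linear subspace $\Pi_\bfJ$ is indeed the ample generator of $\mathrm{Pic}(\Pi_\bfJ)$, and this is immediate because a linear $\bP^{\ell-1}\subset \bP^{n-1}$ has $\mathrm{Pic}$ generated by the hyperplane class pulled back from the ambient space.
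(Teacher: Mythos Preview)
Your proposal is correct and follows the same approach as the paper: the corollary is stated without proof, as an immediate consequence of Lemma~\ref{lem:k=1proj} (which gives $\Pi_\bfJ(\veps)\simeq\bP^{\ell(\bfJ)-1}$ for $\veps\neq 0$, hence $\Pi_\bfJ\simeq\bP^{\ell(\bfJ)-1}$) together with the standard formula for global sections on projective space. Your write-up simply makes explicit the identification of $\eO(1)$ under this isomorphism, which the paper leaves tacit.
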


We provide a flat basis of the homogeneous coordinate ring of the family $\underline{\Pi}_\bfJ$. 
Recall the embedding $\underline\Pi_\bfJ\subset\bA^1\times \prod_{b\in\bZ_n} \bP(W^{(b)})$ and the homogeneous coordinates $\Delta^{(b)}_i$ in the $b$-th factor $\bP(W)$. 
The homogeneous coordinate ring $\underline{\emph{Pl}}$ is a quotient of the 
polynomial ring in variables $\Delta^{(b)}_i$, $b\in\bZ_n$ and $i\in [n]$. 
The ring $\underline{\emph{Pl}}$ is multi-graded by the group 
$\bZ_{\ge 0}^{\bZ_n}$, $\underline{\emph{Pl}}=\bigoplus_{\bfm} {\emph{Pl}}(\bfm)$,
where  $\bfm=(m_0,\dots,m_{n-1})$, $m_b\in\bZ_{\ge 0}$. Explicitly, $m_b$ is equal to the 
number of variables of the form $\Delta^{(b)}_\bullet$ in a monomial.

Our goal is to present a basis of the ring $\underline{\emph{Pl}}$ 
which is compatible with the decomposition into
the direct sum of ${\emph{Pl}}(\bfm)$. Each element of the basis is of the form
\begin{equation}\label{eq:admmon}
\prod_{b\in\bZ_n} \Delta^{(b)}_{i^{(b)}_1}\dots \Delta^{(b)}_{i^{(b)}_{m_b}},\ i^{(b)}_1\le\dots\le i^{(b)}_{m_b}.
\end{equation}
The collection $\bfm=(m_b)_b$ is called the multi-degree of the monomial \eqref{eq:admmon}.

\begin{dfn}\label{dfn:admmon}
A monomial \eqref{eq:admmon} is called $\bfJ$-admissible if 
\begin{itemize}
\item $b+i^{(b)}_r-1\in L(\bfJ)$ for all $b\in\bZ_n$, $r\in [m_b]$,
\item if $i^{(b)}_u > s$ for some $b\in \bZ_n$, $u\in[m_b]$, $s\in [n-1]$, then 
$i^{(b+s) \!\!\mod n}_r\le i^{(b)}_u - s$ for all $r$.
\end{itemize}
\end{dfn}

\begin{rem}
Let us comment on the first condition $J_{b+i^{(b)}_r-1}=1$ (here as usual the lower index of 
$J$ is understood as an element of $\bZ_n$).
The condition implies that $i^{(b)}_r\ge J_b$, i.e.  cuts out the corresponding Schubert variety,
because if $i^{(b)}_r < J_b$, then
$J_{b+i^{(b)}_r-1} = J_b - i^{(b)}_r + 1 > 1$. Moreover, if all 
numbers $m_b$ are strictly positive, then the conditions $J_{b+i^{(b)}_r-1}=1$
are equivalent to the Schubert conditions $i^{(b)}_r\ge J_b$ (taking into account the second condition in Definition \ref{dfn:admmon}). In fact,
assume that all the Schubert conditions $i^{(b)}_r\ge J_b$ hold and all
the second type conditions from Definition \ref{dfn:admmon} hold as well.
Then  
\[
1 = i^{(b)}_r - (i^{(b)}_r - 1) \ge i^{(b+i^{(b)}_r-1)}_1 \ge J_{b+i^{(b)}_r-1}. 
\] 
However, if some of the multiplicities $m_b$ vanish, then the conditions 
$i^{(b)}_r\ge J_b$ are weaker than the first type conditions in 
Definition \ref{dfn:admmon}.
\end{rem}

\begin{prop}\label{prop:number}
Let $|\bfm|=\sum_{b\in\bZ_n} m_b$. Then 
the number of $\bfJ$-admissible monomials of multi-degree $\bfm$ is equal to 
$\binom{|\bfm|+\ell(\bfJ)-1}{\bfm}$.
\end{prop}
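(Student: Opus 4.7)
The plan is to establish an explicit bijection
\[
\phi : \{\bfJ\text{-admissible monomials of multi-degree }\bfm\} \longrightarrow \{\text{multisets of size }|\bfm|\text{ in }L(\bfJ)\}
\]
that sends each factor $\Delta^{(b)}_{i}$ to $(b+i-1)\bmod n\in L(\bfJ)$ (well-defined by the first condition of Definition \ref{dfn:admmon}), and a monomial to the multiset of these values over all of its factors. Since the number of multisets of size $|\bfm|$ drawn from an $\ell(\bfJ)$-element set equals $\binom{|\bfm|+\ell(\bfJ)-1}{|\bfm|}$, bijectivity of $\phi$ yields the desired formula. Worth noting is that the formula is independent of how $|\bfm|$ is distributed among the $m_b$, consistent with the flatness of the family (Theorem \ref{thm:flat}) and the identification of the generic fiber with $\bP^{\ell(\bfJ)-1}$ (Lemma \ref{lem:k=1proj}).

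To prove bijectivity, I would construct the inverse explicitly. Given $\bfm$ and a multiset $M=(\mu_\ell)_{\ell\in L}$ with $\sum_\ell\mu_\ell=|\bfm|$, each element $\ell\in M$ is regarded as a ``chip'' to be placed at some position $b\in\bZ_n$ subject to the quotas $m_b$; a chip $\ell$ placed at $b$ contributes the factor $\Delta^{(b)}_{((\ell-b)\bmod n)+1}$. Placement is carried out greedily: process chips in an order dictated by admissibility, giving priority to chips whose home position $b=\ell$ still has an unfilled slot, and otherwise sending each chip to the nearest available slot reached by moving backwards from $\ell$ along $\bZ_n$. Sorting the resulting indices at each position yields weakly increasing tuples.

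The verification has two parts. First, condition (ii) of Definition \ref{dfn:admmon} translates into a nesting statement for the arcs $[b,b+i-1]\subset\bZ_n$ associated to the chips: an arc passing through a position $b+s$ must contain every arc starting at $b+s$. The backward greedy placement preserves this nesting. Conversely, given an admissible monomial, admissibility lets one read off an outermost chip (for instance, one realizing the maximum $\alpha_b = i^{(b)}_{m_b}$), remove it, and recurse on a smaller $\bfm$ and $M$; by induction on $|\bfm|$ the inverse is well-defined and yields a unique preimage.

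The main obstacle is the cyclic structure of $\bZ_n$: arcs wrap around and the correct priority order for greedy placement is delicate. Small cases (e.g.\ $n=4$, $L=\{0,2\}$, $\bfm=(1,0,0,1)$, $M=\{0,2\}$) exhibit distinct distributions of the same multiset among the positions, only one of which is admissible. The crucial combinatorial input is the reformulation of admissibility via the ``maximum arc'' inequality $\alpha_{b+s}\le\alpha_b-s$ whenever $\alpha_b>s$, which fixes the priority among competing chips and makes the greedy backward placement canonical, thereby enabling the inductive argument.
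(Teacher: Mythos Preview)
Your approach is essentially the same as the paper's: the map $\phi$ sending each factor $\Delta^{(b)}_i$ to $(b+i-1)\bmod n$ is exactly the ``weight'' map the paper uses, and both proofs then invert it by a greedy filling procedure. The only cosmetic difference is that the paper organizes the inverse position-by-position (determine all $q^{(\bullet)}_1$ first, then all $q^{(\bullet)}_2$, etc.), which sidesteps the cyclic ambiguity you flag, whereas you describe it chip-by-chip; once one checks these produce the same output, the arguments coincide.
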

\begin{proof}
We first assume that $l(\bfJ)=n$, i.e. $J_b=1$ for all $b$, and then deduce the general statement.

The number  $\binom{|\bfm|+n-1}{\bfm}$ is equal to the number of sequences 
$1\le s_1\le \dots \le s_n\le |\bfm|$ (i.e. to the dimension of the $|\bfm|$-th symmetric power 
of an $n$-dimensional vector space). We establish a "weight preserving" bijection between the  
set of such collections and the set of admissible monomials of the form \eqref{eq:admmon}. 
Explicitly, the bijection 
being weight preserving means that for each $j=1,\dots,n$ one has
$$\#\{(r,b):\ i_r^{(b)}+b = j \mod n\} = s_j.$$   
In words, we are counting the number of Pl\"ucker variables $\Delta_{i_r}^{(b)}$ -- 
factors of a monomial -- such that $i_r+b$ is fixed modulo $n$
(this is compatible with the decomposition into indecomposable summands of the representation
$M(\veps)$, $\veps\ne 0$  of the cyclic equioriented cyclic quiver, see section \ref{sec:qG}).

So our task is to distribute $s_j$ balls into the cells labeled by a vertex $b$ 
and a position $i$ with $i+b=j$  taking care of the admissibility condition. Let 
$$q_i^{(b)}=\#\{(r,b):\ i_r^{(b)}=i\}$$ be the number of factors of the form $\Delta_i^{(b)}$
in \eqref{eq:admmon}.  
We first observe that if  $q^{(b)}_{r-b}>0$ with $r-b\ne 1$, then $q^{(r-1)}_{1}=m_{r-1}$
(i.e. all the balls in the $(r-1)$-st column are concentrated at the first position).
In fact, since we are constructing an admissible monomial, 
$q^{(b)}_{r-b}>0$, $r-b>1$ implies $q^{(r-1)}_{>1}=0$. So we conclude that in the process of distributing
our balls we are forced to start with the cells corresponding to the lowest possible position
for each vertex (i.e. with $q^{(\bullet)}_{1}$). There are two options here: either 
$s_r\le m_{r-1}$ and all the $s_r$ balls produce $q^{(r-1)}_{1}=s_r$, or $s_r>m_{r-1}$ and
we are left to distribute further balls among the cells. So after the first step all 
the entries $q^{(\bullet)}_1$ are already uniquely determined.

Assume that for some values of $r$ the second option as above is realized (i.e. 
$s_r>m_{r-1}$). Let us look at the value of $q^{(r-2)}_{2}$.
If  $q^{(b)}_{r-b}>0$, $r-b>2$, then $q^{(r-2)}_{>2}=0$ (as a consequence of admissibility).
Therefore, there are two options here: either $q^{(r-2)}_1=m_{r-2}$ or $q^{(r-2)}_1 < m_{r-2}$.
In the first case all the values $q^{(r-2)}_{>1}$ (including $q^{(r-2)}_2$) automatically vanish.
In the second case, we fill $q^{(r-2)}_2$ with the balls from the initial group if $s_r$ 
(left from filling $q^{(r-1)}_1$) until either all the $s_r$ balls are used or the total number
of $m_{r-1}$ balls at this vertex is achieved. After this step all the values of $q^{(\bullet)}_{2}$
are fixed. We then proceed with $q^{(\bullet)}_{3}$, etc.

We are left to consider the case of general positroid variety. The first condition of Definition
\ref{dfn:admmon} says that in order to pass from the maximal case considered above 
($J_b=1$ for all $b\in\bZ_n$) to the arbitrary $\bfJ$ case one needs to keep only the entries
$q^{(b)}_r$ such that $b+r-1\in L(\bfJ)$ (to be compared with Lemma \ref{lem:k=1proj}).
To prove our proposition in this case we just repeat the argument above for the relevant entries.
\end{proof}

\begin{lem}\label{lem:span}
Admissible monomials span the ring $\underline{\emph{Pl}}$ as $\bC[\veps]$ module.
\end{lem}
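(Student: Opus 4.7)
The plan is to construct an explicit reduction that expresses any monomial in the Pl\"ucker variables as a $\bC[\veps]$-linear combination of $\bfJ$-admissible monomials modulo $\underline{\mathcal{I}}_\bfJ$. First, the Schubert relation \eqref{rel:Sch} kills any monomial containing a factor $\Delta^{(a)}_i$ with $i < J_a$; so I may assume each factor of $m$ satisfies $i \ge J_a$.

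For $k = 1$, the swap relations \eqref{rel:veps1}, \eqref{rel:veps2} specialize to bilinear congruences of the form
\[
\Delta^{(a)}_i \Delta^{(a+c)}_j \equiv \veps^{e} \Delta^{(a)}_{(j+c) \bmod n} \Delta^{(a+c)}_{(i-c) \bmod n} \pmod{\mathcal{I}_\bfJ},
\]
with a non-negative exponent $e$ accounting for modular wrap-around. A direct check shows that these swaps preserve the diagonal signature $s_j := \#\{(b, r) : b + i^{(b)}_r \equiv j \pmod n\}$: the two factors simply exchange diagonals. By the bijection in the proof of Proposition~\ref{prop:number}, each signature admits at most one $\bfJ$-admissible monomial of multi-degree $\bfm$, which is the unique reduction target. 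I would then swap greedily: whenever condition~2 is violated, i.e.\ $i^{(b)}_u > s$ and some $i^{(b+s)}_r > i^{(b)}_u - s$, swap the offending pair. A monomial order refining the lexicographic comparison of the sorted per-column index multisets decreases under each such swap (after a case-split on modular wrap-around), so the procedure terminates at a monomial satisfying both Schubert and condition~2.

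The main obstacle is condition~1 of admissibility, $b + i^{(b)}_r - 1 \in L(\bfJ)$. By the remark following Definition~\ref{dfn:admmon}, condition~1 is forced by Schubert and condition~2 whenever every $m_b > 0$; however, if some $m_b = 0$ the reduction may leave a monomial $m$ satisfying the previous conditions yet still containing a \emph{ghost} factor $\Delta^{(b_0)}_{i_0}$ with $c := (b_0 + i_0 - 1) \bmod n \notin L(\bfJ)$ (so $J_c > 1$). To eliminate such $m$ I would use a multi-graded saturation argument: if $i_0 = 1$ then $b_0 = c$ and $\Delta^{(b_0)}_1 = 0$ by Schubert; otherwise, for each $j \in [n]$ the swap with $s = i_0 - 1$ applied to the pair $\Delta^{(b_0)}_{i_0} \Delta^{(c)}_j$ produces the factor $\Delta^{(c)}_1 = 0$, so $m \cdot \Delta^{(c)}_j \in \mathcal{I}_\bfJ$ for every $j$. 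Thus $m$ is annihilated modulo $\mathcal{I}_\bfJ$ by the irrelevant ideal of column $c$, and since $\underline{\mathcal{I}}_\bfJ$ contains this irrelevant-ideal saturation (both ideals cut out $\underline{\Pi}_\bfJ$ in $\bA^1 \times \prod_b \bP(W)$), we conclude $m = 0$ in $\underline{\emph{Pl}}_\bfJ$. The $\bC[\veps]$-flatness from Theorem~\ref{thm:flat} ensures that the $\veps$-factors introduced by wrap-around swaps can be canceled cleanly, which is the most delicate bookkeeping in the argument.
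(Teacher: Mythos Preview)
Your strategy is essentially the paper's—kill small indices by Schubert, then straighten by quadratic swaps—but with one genuine addition and one unnecessary complication.

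On termination: the paper's argument is sharper than yours. It simply checks that each swap applied to an offending pair $\Delta^{(b)}_i\Delta^{(b+s)}_j$ with $i\ge s+1$, $j>i-s$ strictly decreases the product $\prod_{b,r} i^{(b)}_r$ of all lower indices (in the no-wrap case $ij>(i-s)(j+s)$ because $j>i-s$, and in the wrap case $ij>(i-s)(j+s-n)$ since both factors shrink). Your appeal to ``a monomial order refining the lexicographic comparison of the sorted per-column index multisets'' is not substantiated: in the no-wrap swap the index in column $b$ \emph{increases} (from $i$ to $j+s>i$) while the index in column $b+s$ decreases, so whether the tuple drops depends on which column dominates, and you never pin this down. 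Replace this with the product-of-indices invariant and the argument is clean.

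On condition~1 of Definition~\ref{dfn:admmon}: you are right to flag this. When some $m_b=0$, a monomial can satisfy Schubert and condition~2 yet still contain a ``ghost'' factor $\Delta^{(b_0)}_{i_0}$ with $c=b_0+i_0-1\notin L(\bfJ)$; the paper's proof silently identifies ``non-admissible'' with ``violates condition~2'' and does not address this case. Your saturation argument is correct and fills the gap: the swap with $s=i_0-1$ rewrites $\Delta^{(b_0)}_{i_0}\Delta^{(c)}_j$ to a multiple of $\Delta^{(c)}_1$, which Schubert kills since $J_c>1$; hence $m\cdot\Delta^{(c)}_j\in\mathcal{I}_\bfJ\subset\underline{\mathcal{I}}_\bfJ$ for all $j$, and since the radical ideal $\underline{\mathcal{I}}_\bfJ$ of a subvariety of $\bA^1\times\prod_b\bP(W^{(b)})$ is saturated with respect to each column's irrelevant ideal, $m\in\underline{\mathcal{I}}_\bfJ$.

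Drop the final sentence about flatness. For a condition-2-violating pair one has $i-s\ge 1$, so only the second or fourth relation of \eqref{eq:relk=1} applies; in both the power of $\veps$ sits on the \emph{target} side, giving $m\equiv\veps^{k}m'$ with $k\ge 0$. There is nothing to cancel, and Theorem~\ref{thm:flat} plays no role here.
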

\begin{proof}
Relations from Definition \ref{dfn:ideal} for $k=1$ read as follows:
\begin{equation}\label{eq:relk=1}
\begin{tabular}{c c c l}
$\Delta^{(b)}_i$ & = & 0, & $i<J_b$,\\
$\Delta^{(b)}_i \Delta^{(b+s)}_j$ & = & 
$\Delta^{(b)}_{j+s} \Delta^{(b+s)}_{i-s}$, & $i-s\ge 1,\ j+s\le n$,\\
$\veps \Delta^{(b)}_i \Delta^{(b+s)}_j$ & = & 
$\Delta^{(b)}_{j+s} \Delta^{(b+s)}_{i-s+n}$, & $i-s\le 0,\ j+s\le n$,\\
$\Delta^{(b)}_i \Delta^{(b+s)}_j$  & = &
$\veps \Delta^{(b)}_{j+s-n} \Delta^{(b+s)}_{i-s}$, & $i-s\ge 0,\ j+s> n$,\\
$\Delta^{(b)}_i \Delta^{(b+s)}_j$ & =  &
$\Delta^{(b)}_{j+s-n} \Delta^{(b+s)}_{i-s+n}$, & $i-s\le 0,\ j+s> n$.
\end{tabular} 	
\end{equation}
Now assume we are given a non-admissible monomial, i.e. it contains a binomial of the form
$\Delta^{(b)}_i \Delta^{(b+s)}_j$ with $i\ge s+1$, $j>i-s$.
We consider two cases: $j>n-s$ and $j\le n-s$. 
In the first case one has
\[
\Delta^{(b)}_i \Delta^{(b+s)}_j = 
\veps \Delta^{(b)}_{j+s-n} \Delta^{(b+s)}_{i-s} 
\]
and in the second case ($i-s<j\le n-s$) one has
\[
\Delta^{(b)}_i \Delta^{(b+s)}_j = 
\Delta^{(b)}_{j+s} \Delta^{(b+s)}_{i-s}. 
\]
In both cases the product of lower indices of variables becomes smaller (i.e. $ij > (i-s)(j+s-n)$ in the first case and $ij>(i-s)(j+s)$ in the second). Hence we obtain the claim of our Lemma.
\end{proof}

\begin{cor}\label{cor:redschk=1}
For any $\veps\ne 0$ quadratic relations \eqref{eq:relk=1} define the reduced scheme structure.
The admissible monomials form a basis of the coordinate ring of the corresponding fiber.
\end{cor}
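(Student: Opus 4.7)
The plan is to prove the corollary by a Hilbert-function comparison in each multi-degree $\bfm\in\bZ_{\ge 0}^{\bZ_n}$, combining the geometric identification of Lemma \ref{lem:k=1proj}, the counting formula of Proposition \ref{prop:number}, and the spanning statement of Lemma \ref{lem:span}.

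First I would compute the multi-graded Hilbert function of the reduced fiber $\Pi_\bfJ(\veps)$. By Lemma \ref{lem:k=1proj} this fiber is isomorphic to $\bP^{\ell(\bfJ)-1}$; only the Pl\"ucker coordinates $\Delta^{(b)}_i$ with $b+i-1\in L(\bfJ)$ survive on it, and the binomial relations in rows two through five of \eqref{eq:relk=1} specialised at the fixed $\veps\ne 0$ identify any two surviving coordinates sharing the common value of $b+i-1 \bmod n$ up to a nonzero scalar (a power of $\veps$). This partitions the surviving coordinates into exactly $\ell(\bfJ)$ orbits indexed by $L(\bfJ)$, with one representative at each vertex $b$. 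Consequently the embedding $\Pi_\bfJ(\veps)\hookrightarrow\prod_{b\in\bZ_n}\bP(W^{(b)})$ is linear in each factor, so the pullback of $\eO(\bfm)=\bigotimes_b \eO_b(m_b)$ equals $\eO_{\bP^{\ell-1}}(|\bfm|)$. The restriction map from the ambient multi-graded polynomial ring factors through a product of surjective linear restrictions in each factor, followed by the surjective symmetric-multiplication map $\bigotimes_b\mathrm{Sym}^{m_b}(\bC^{\ell})^*\to\mathrm{Sym}^{|\bfm|}(\bC^{\ell})^*$, so the multi-graded coordinate ring of $\Pi_\bfJ(\veps)$ equals $\bigoplus_\bfm H^0(\bP^{\ell-1},\eO(|\bfm|))$, of dimension $\binom{|\bfm|+\ell-1}{|\bfm|}$ in multi-degree $\bfm$.

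Second I would compare this dimension with the count of admissible monomials. Proposition \ref{prop:number} produces exactly $\binom{|\bfm|+\ell(\bfJ)-1}{\bfm}$ admissible monomials in multi-degree $\bfm$, which matches the dimension just computed. Let $I_\veps\subset\bC[\Delta^{(b)}_i]$ denote the ideal generated by the $\veps$-specialisations of \eqref{eq:relk=1} and $J_\veps\supseteq I_\veps$ the saturated radical ideal of $\Pi_\bfJ(\veps)$. Specialising Lemma \ref{lem:span} at the given $\veps\ne 0$ yields a spanning set of $\bigl(\bC[\Delta]/I_\veps\bigr)_\bfm$ of cardinality at most equal to the number of admissible monomials, and its image in $\bigl(\bC[\Delta]/J_\veps\bigr)_\bfm$ spans a space of dimension exactly this number. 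The two inequalities must collapse to equalities in every multi-degree, which simultaneously forces $I_\veps=J_\veps$ and establishes linear independence of the admissible monomials modulo $I_\veps$, proving both assertions of the corollary.

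The main obstacle will be the first step: verifying that the specialised binomial relations at $\veps\ne 0$ really do force the embedding of $\bP^{\ell-1}$ into each projective factor to be linear, equivalently that the surviving Pl\"ucker coordinates at a fixed vertex $b$ are scalar multiples (by explicit powers of $\veps$) of the $\ell(\bfJ)$ independent homogeneous coordinates of $\bP^{\ell-1}$. This requires tracing carefully which of the four case distinctions in \eqref{eq:relk=1} applies to each pair $(b,i),(b+s,j)$ sharing an orbit, and checking that the resulting scalar identifications are consistent around cycles. Once this is granted, the passage from spanning to basis and from basis to reducedness is formal.
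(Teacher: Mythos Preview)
Your argument is correct and is precisely the paper's: the sandwich between the quotient by the relations and the reduced coordinate ring, with Lemma~\ref{lem:span} giving the upper bound, Proposition~\ref{prop:number} the count, and Lemma~\ref{lem:k=1proj} the lower bound, forces equality everywhere.

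One comment on your closing paragraph: the ``obstacle'' you anticipate is not one, and you should not try to extract linearity of the embedding from a case analysis of the binomial relations. It is immediate from the geometry. For $k=1$ each opposite Schubert variety $X^-_{J_b}\subset\bP^{n-1}$ is a linear subspace, so $\Pi_\bfJ=\bigcap_b\varphi^{-b}X^-_{J_b}$ is a linear $\bP^{\ell-1}\subset\bP^{n-1}$; the map to the $b$-th factor is the linear isomorphism $M_{0\to b}(\veps)$ applied to this linear subspace. Hence $\eO(\bfm)$ pulls back to $\eO(|\bfm|)$ and your surjectivity argument for the restriction goes through without any consistency check around cycles.
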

\begin{proof}
By Lemma \ref{lem:span} the admissible monomials span the quotient ring by relations \eqref{eq:relk=1}.
By Proposition \ref{prop:number} the number of admissible monomials is equal to the binomial coefficient,
which is equal to the dimension of corresponding component of the homogeneous coordinate
ring of the projective space. Finally, Lemma \ref{lem:k=1proj} finalizes the proof.    
\end{proof}

\begin{thm}
$\bfJ$-admissible monomials form a free basis of $\underline{\emph{Pl}}$ as 
$\bC[\veps]$ module and Conjecture \ref{conj:reduced} holds true. 
\end{thm}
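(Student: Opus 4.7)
The plan is to derive both the freeness over $\bC[\veps]$ and Conjecture \ref{conj:reduced} from the ingredients already assembled. Write $A := \bC[\veps, \Delta]/\mathcal{I}_\bfJ$ for the ring presented by Definition \ref{dfn:ideal}; the theorem amounts to showing $A$ is $\bC[\veps]$-free on admissible monomials, coincides with $\underline{\emph{Pl}}_\bfJ$, and has reduced specialization $A/(\veps) = \emph{Pl}_\bfJ(0)$. First I would prove freeness: each multigraded piece $A_\bfm$ is a finitely generated $\bC[\veps]$-module spanned by the $N := \binom{|\bfm|+\ell(\bfJ)-1}{|\bfm|}$ admissible monomials (Lemma \ref{lem:span} and Proposition \ref{prop:number}), so by the PID structure theorem $A_\bfm \cong \bC[\veps]^r \oplus T$ with $T$ torsion. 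For every $\veps_0 \ne 0$, Corollary \ref{cor:redschk=1} identifies $A_\bfm/(\veps - \veps_0)$ with $\emph{Pl}_\bfJ(\veps_0)_\bfm$ of dimension $N$, forcing $r = N$ and placing any torsion at $\veps = 0$; spanning also forces $\dim A_\bfm/(\veps) \le N$, while $\dim A_\bfm/(\veps) = N + \#T$, so $T = 0$ and admissible monomials form a free basis.

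Next, freeness makes $A$ $\veps$-torsion-free, so $A \hookrightarrow A[\veps^{-1}]$. Over $\bA^1 \setminus \{0\}$ the maps $M_{b\to b+1}(\veps)$ are isomorphisms trivializing the family, so $A[\veps^{-1}] \cong \bC[\veps, \veps^{-1}] \otimes_\bC \emph{Pl}_\bfJ$, which is reduced since $\Pi_\bfJ \cong \bP^{\ell(\bfJ)-1}$ is integral. Thus $A$ is reduced, so the natural surjection $A \twoheadrightarrow \underline{\emph{Pl}}_\bfJ$—whose kernel is the nilradical of $A$—is an isomorphism, yielding $\mathcal{I}_\bfJ = \underline{\mathcal{I}}_\bfJ$ and the global half of Conjecture \ref{conj:reduced}.

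For the special fiber, flatness of $\underline{\Pi}_\bfJ$ over $\bA^1$ (Theorem \ref{thm:flat}) makes $\veps$ a non-zero divisor on $\eO$; the resulting short exact sequence $0 \to \eO(\bfm) \xrightarrow{\veps} \eO(\bfm) \to \eO(\bfm)|_{\Pi_\bfJ(0)} \to 0$ pushes forward to an injection $A_\bfm/(\veps) \hookrightarrow H^0(\Pi_\bfJ(0), \eO(\bfm))$, which by construction factors as $A_\bfm/(\veps) \twoheadrightarrow \emph{Pl}_\bfJ(0)_\bfm \hookrightarrow H^0(\Pi_\bfJ(0), \eO(\bfm))$ (both reductions are the natural evaluation of polynomials on $\Pi_\bfJ(0)$). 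Injectivity of the composite forces the left surjection to be an isomorphism, giving $A/(\veps) = \emph{Pl}_\bfJ(0)$ reduced; the case $\veps_0 \ne 0$ of Conjecture \ref{conj:reduced} is Corollary \ref{cor:redschk=1}. The substantive input is the generic-fiber dimension count supplied by Corollary \ref{cor:redschk=1}; once that is in place the rest is a formal combination of the PID structure theorem, $\veps$-torsion-freeness, and the flat-base long exact sequence.
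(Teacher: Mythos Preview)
Your Steps 1 and 2 are correct and give a pleasant alternative to the paper's approach: the paper establishes the $\veps=0$ case first by a direct combinatorial argument (showing admissible monomials are linearly independent as functions on $\Pi_\bfJ(0)$ via a weight/maximal-index argument on irreducible components), and only then concludes freeness; you instead deduce freeness from the generic-fiber dimension count via the structure theorem over the PID $\bC[\veps]$, and then obtain reducedness of $A$ by embedding into $A[\veps^{-1}]$. That is genuinely different and arguably cleaner for the global statement.

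The gap is in Step 3. From the short exact sequence $0 \to \eO(\bfm) \xrightarrow{\veps} \eO(\bfm) \to \eO(\bfm)|_{\Pi_\bfJ(0)} \to 0$ on $\underline{\Pi}_\bfJ$, taking global sections yields an injection of $H^0(\underline{\Pi}_\bfJ,\eO(\bfm))/\veps\cdot H^0(\underline{\Pi}_\bfJ,\eO(\bfm))$ into $H^0(\Pi_\bfJ(0),\eO(\bfm))$, not of $A_\bfm/(\veps)$. You are implicitly using $A_\bfm = H^0(\underline{\Pi}_\bfJ,\eO(\bfm))$, or at least that the cokernel of $A_\bfm \hookrightarrow H^0(\underline{\Pi}_\bfJ,\eO(\bfm))$ has no $\veps$-torsion; neither is established, and the equality is essentially the content of Conjecture \ref{conj:higercoh} (combined with base change), which the paper does \emph{not} assume. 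Note also that ``$A$ reduced and free over $\bC[\veps]$'' does not by itself force $A/(\veps)$ reduced (consider $\bC[\veps,x]/(x^2-\veps)$). So to finish you still need to show directly that admissible monomials are linearly independent on $\Pi_\bfJ(0)$, which is precisely the combinatorial argument the paper supplies; the sheaf-cohomology shortcut does not close without that missing identification.
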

\begin{proof}
Thanks to Corollary \ref{cor:redschk=1} above we are left to work out the $\veps=0$ case. More precisely, let us show 
that the admissible monomials are linearly independent in the homogeneous coordinate ring of 
a positroid variety $\Pi_\bfJ(0)$. We work out the  maximal case $L(\bfJ)=\bZ_n$ (i.e. 
$J_b=1$ for all $b$) with $\Pi_\bfJ(0)$ being the juggling variety. By
Lemma \ref{lem:k=1projdeg} and Definition \ref{dfn:admmon} the general $\bfJ$ is 
checked in the same way.

Recall that in the maximal case $\Pi_\bfJ(0)$ is a union of $n$ irreducible components labeled by the elements $a\in\bZ_n$ with 
the $a$-th component consisting of collections $(U_b)_b\in \Pi_\bfJ(0)$ such that
\[
U_{a+1}\subset \mathrm{span}\{w_i\}_{i=1}^{n-1},\ 
U_{a+2}\subset \mathrm{span}\{w_i\}_{i=1}^{n-2},\dots,
U_{a-1} =\mathrm{span}\{w_1\}.
\]
Let us consider a nontrivial linear combination $C$ of admissible monomials. We pick a monomial
in $C$
\begin{equation}\label{eq:monchoice}
\prod_{b\in\bZ_n} \Delta^{(b)}_{\ell^{(b)}_1}\dots \Delta^{(b)}_{\ell^{(b)}_{m_b}},\ \ell^{(b)}_1\le\dots\le \ell^{(b)}_{m_b}
\end{equation}
satisfying the following condition:
let  $\ell^{(b_0)}_{r_0}$ be a maximal index in the product above.
Then all the indices of all the Pl\"ucker variables in all the summands of our 
linear combination $C$ are no larger than   $\ell^{(b_0)}_{r_0}$.
In particular, since monomial \eqref{eq:monchoice} is admissible 
we derive that 
\begin{equation}\label{eq:cond}
\ell^{(b_0+r_0-1)}_{>1} =0, \ \ell^{(b_0+r_0-2)}_{>2} =0, \dots,
\ell^{(b_0+r_0+1)}_{n} =0.
\end{equation}
Hence, the monomial \eqref{eq:monchoice} defines a nontrivial
function on the component corresponding to vertex $b_0+r_0$. In order
to prove that our non-trivial combination $C$ can not vanish on the degenerate positroid variety it suffices to prove that it does 
not vanish on the $(b_0+r_0)$-th irreducible component. However,
the proof of Proposition \ref{prop:number} implies that two different admissible monomials 
can not have the same weight (the weight is the $n$-vector whose $j$-th component 
is equal to the number of Pl\"ucker variables $\Delta^{(b)}_r$ showing up in a monomial 
such that $b+r=j$ modulo $n$). In our situation we have even more restrictions: 
we only care about monomials satisfying conditions \eqref{eq:cond}, i.e. monomials not vanishing on our irreducible component.
We conclude that $C$ does not vanish completely on $\Pi_\bfJ(0)$.
\end{proof}

\end{document}